\documentclass[11pt, a4paper, reqno]{amsart}
\usepackage{amsmath}
\usepackage{amsfonts}
\usepackage{amssymb}
\usepackage{amsthm}
\usepackage{url}
\usepackage{color}
\usepackage{array}
\usepackage{comment}

\usepackage{empheq}

\newcommand{\R}{\mathbb{R}}

\def\vint_#1{\mathchoice%
          {\mathop{\kern 0.2em\vrule width 0.6em height 0.69678ex depth -0.58065ex
                  \kern -0.8em \intop}\nolimits_{\kern -0.4em#1}}%
          {\mathop{\kern 0.1em\vrule width 0.5em height 0.69678ex depth -0.60387ex
                  \kern -0.6em \intop}\nolimits_{#1}}%
          {\mathop{\kern 0.1em\vrule width 0.5em height 0.69678ex depth -0.60387ex
                  \kern -0.6em \intop}\nolimits_{#1}}%
          {\mathop{\kern 0.1em\vrule width 0.5em height 0.69678ex depth -0.60387ex
                  \kern -0.6em \intop}\nolimits_{#1}}}

\newcommand{\loc}{\mathrm{loc}}

\newcommand{\eps}{\varepsilon}

\newcommand{\ch}{\text{\raise 1.3pt \hbox{$\chi$}\kern-0.2pt}}

\theoremstyle{plain}
\newtheorem{theorem}[equation]{Theorem}
\newtheorem{lemma}[equation]{Lemma}
\newtheorem{corollary}[equation]{Corollary}

\numberwithin{equation}{section}

\theoremstyle{definition}
\newtheorem{definition}[equation]{Definition}

\theoremstyle{remark}

\def\Xint#1{\mathchoice%
	{\XXint\displaystyle\textstyle{#1}}%
	{\XXint\textstyle\scriptstyle{#1}}%
	{\XXint\scriptstyle\scriptscriptstyle{#1}}%
	{\XXint\scriptscriptstyle\scriptscriptstyle{#1}}%
	\!\int}
\def\XXint#1#2#3{{\setbox0=\hbox{$#1{#2#3}{\int}$}
		\vcenter{\hbox{$#2#3$}}\kern-.5\wd0}}
\def\intbar{\Xint-}

\newcommand{\bp}[1]{\left( #1 \right)}
\newcommand{\bv}[1]{\left| #1 \right|}

\begin{document}

\title[$p$-Poisson equation: existence and stability]
{Existence and Stability of Solutions of the Dirichlet Problem for the $p$-Poisson Equation in Metric Measure Spaces }

\author{
Luis 
Castillo 
}
\address{
University of Cincinnati, 
Department of Mathematical Sciences,
4199 French Hall West
2815 Commons Way
Cincinnati, USA 
}
\email{castilll@mail.uc.edu}

\author{Timo Takala}
\address{Aalto University, Department of Mathematics and Systems Analysis, P.O. Box 11100 FI-00076 Aalto Finland}
\email{timo.i.takala@aalto.fi}

\thanks{Part of the research was done while L.C. visited Aalto University. 
The visit was supported by the Research Council of Finland project 360184.}

\thanks{T.T. was funded by the Finnish Cultural Foundation.}

\begin{abstract}
We study the Dirichlet problem for the $p$-Poisson equation in the metric measure space setting equipped with a doubling measure and supporting a $(p,p)$-Poincaré inequality.
We prove the existence of the solutions by using a variational approach.
We prove the stability and uniqueness of the solutions, when the space is also geodesic.
\end{abstract}

\maketitle

\noindent
    {\small \emph{Key words and phrases}: $p$-Poisson equation, Dirichlet problem, metric measure space, minimizer, existence, stability, Cheeger's gradient, upper gradient.}

\medskip

\noindent
{\small Mathematics Subject Classification (2020): 30L99, 35B35, 46E36.}

\section{Introduction}

In the Euclidean setting, for a function $u\in C^2 ( \Omega ) $ defined in an open set $\Omega\subset \R^n$, the $p$-Laplacian is defined as
\begin{equation*}\label{p-Laplacian-1}
\Delta_p u :=
\text{div} ( | \nabla  u |^{p-2}  \nabla  u ).
\end{equation*}
Given a function $f \colon \Omega \to \R $ the equation
\begin{equation*}
-\Delta_p u
= 
f \text{ in } \Omega
\end{equation*}
is called the $p$-Poisson equation with source term $f$.
The problem of our interest is the Dirichlet problem for the $p$-Poisson equation
\begin{subequations}\label{p-Dirichlet problem for the p-Poisson equation Euclidean case}
	\begin{empheq}[left=\empheqlbrace]{align}
		- \Delta_p u &= f \text{ in } \Omega,
		\label{p-Dirichlet problem for the p-Poisson equation Euclidean case-1}
		\\
		u &= g \text{ on } \partial \Omega,
		\label{p-Dirichlet problem for the p-Poisson equation Euclidean case-2}
	\end{empheq}
\end{subequations}
where $g : \partial \Omega \to \mathbb{R}$ describes the boundary conditions.
The case $f=0$, which corresponds to the $p$-Laplace equation, is relevant since it is the Euler equation of the integral $\int_{\Omega} \bv{ \nabla u }^p$ and such an integral is the simplest variational integral of non-quadratic growth. 
So \eqref{p-Dirichlet problem for the p-Poisson equation Euclidean case-1} is the natural generalization.

A more general approach to partial differential equations is when the source term is a Radon measure
\begin{equation}\label{p-Laplace equation with measure in the source term}
-\Delta_p u = \mu.
\end{equation}
This approach is detailed more explicitly in \cite{Kolokoltsov2019}.
The equation \eqref{p-Dirichlet problem for the p-Poisson equation Euclidean case-1} is a particular case of \eqref{p-Laplace equation with measure in the source term}, considering the measure induced by $f$.
However, not all partial differential equations of the type \eqref{p-Laplace equation with measure in the source term} can be stated as \eqref{p-Dirichlet problem for the p-Poisson equation Euclidean case-1}.
The general issue in \eqref{p-Laplace equation with measure in the source term} is to represent the measure  with a source function.
In \cite{Mikkonen1996} the Wolff potential is used to get pointwise approximations of a representation for the measure and it is shown that such approximations are appropriate to solve \eqref{p-Laplace equation with measure in the source term}. 
The equation \eqref{p-Laplace equation with measure in the source term} is also studied in \cite{Bungert2023,Heinonen1993}, but in this work we focus on the specific case \eqref{p-Dirichlet problem for the p-Poisson equation Euclidean case-1}.

In \cite{Cianchi2018} the $p$-Poisson equation is studied, and existence and regularity results are provided, when $\Omega$ is convex. 
Solutions to the $p$-Laplace equation are called $p$-harmonic.
The study of such functions is done abstractly considering $\mathcal{A}$-harmonic functions in \cite{Heinonen1993}. This approach is also discussed and extended in \cite{Mikkonen1996}.

In our work we study the Dirichlet problem for the $p$-Poisson equation and the stability of solutions.
The $p$-Poisson equation is studied in \cite{Bungert2023}, but with Neumann conditions, and they make an analysis, where the parameter $p$ tends to infinity, which is an aspect that is not discussed in our work.
The stability is also discussed in \cite{Lindgren2015} in the Euclidean setting considering Hölder regularity.
Considering a suitable positive-valued weight function $w$ one can define the weighted $p$-Laplacian 
\begin{equation*}\label{wigthed p-Laplacian-1}
    \Delta_{w,p} u
    =\text{div} ( w | \nabla  u |^{p-2}  \nabla  u ).
\end{equation*}
This weighted $p$-Laplacian is discussed in \cite{Garain2020}.
It is worth to mention that in \cite{Garain2020} the right-hand side of \eqref{p-Dirichlet problem for the p-Poisson equation Euclidean case-1} depends on the solution $u$.

The weak formulation of \eqref{p-Dirichlet problem for the p-Poisson equation Euclidean case-1} in the Euclidean setting is 
\begin{equation}
\label{p-Poisson in the Euclidean setting weak formulation}
    \int_{ \Omega  }|\nabla u|^{p-2 } \nabla u \cdot \nabla \varphi
=
    \int_{ \Omega } f \varphi
\qquad  
\forall \, \varphi \in W_0^{1,p} (  \Omega  ) . 
\end{equation}
This weak formulation makes sense even if $u$ is not differentiable as long as $u \in W^{1,p}(\Omega)$ and $f \in L^{p'}(\Omega)$, where $p'$ is the Hölder conjugate of $p$.
For the weak formulation of \eqref{p-Dirichlet problem for the p-Poisson equation Euclidean case-2}, we require $g\in W^{1,p} ( \Omega ) $ and $ u-g \in W^{1,p} ( \Omega ) $ such that on the boundary the value of this function is zero.
Thus the weak formulation of \eqref{p-Dirichlet problem for the p-Poisson equation Euclidean case-2} is
\begin{equation}
\label{Reasonably extension of the boundary condition}
    u-g \in W^{1,p}_0  ( \Omega ). 
\end{equation}

We aim to formulate the Dirichlet problem for the $p$-Poisson equation in the metric measure space setting. 
From the weak formulations \eqref{p-Poisson in the Euclidean setting weak formulation} and \eqref{Reasonably extension of the boundary condition}, it follows that to state the Dirichlet problem, we would need to find concepts to replace $\nabla  u$, $| \nabla u|$ and $W^{1,p}_0  ( \Omega ) $ in the metric measure space setting.
The upper gradients are one form of generalizing the magnitude of the derivative in metric measure spaces. However, since $\nabla u$ is a vector, the replacement of this is more delicate. A way to do this is with Cheeger's differential structure \cite{Cheeger1999}.
The space $W^{1,p}_0  ( \Omega ) $ is replaced by the Newtonian space with zero boundary values $N_0^{1,p} (\Omega )$.

In the nonsmooth setting the proofs of potential theory as in  \cite{Heinonen1993,Mikkonen1996} are done considering minimizers of the energy. In \cite{Gong2012,nages} the minimizing property is considered as the definition of $p$-harmonicity in metric measure spaces
and similarly we define solutions to the $p$-Poisson equation in the nonsmooth setting by using minimizers.
In the context of RCD spaces the $p$-Poisson equation is studied in \cite{Benatti2025} and it is shown that the solutions belong to $W^{2,2} (X) $, which is something that is not defined with Newtonian spaces.
The Neumann problem for the $p$-Laplace equation has been studied in the nonsmooth setting in \cite{Capogna2026,Capogna2022}, where they obtain existence, global Hölder regularity and stability of solutions.

In the Cheeger's setting we show that $u$ is a solution in the sense of \eqref{Variational formulation of the p-Laplace equation}, which is a metric measure space version of \eqref{p-Poisson in the Euclidean setting weak formulation}, if and only if $u$ is a minimizer of the functional
\begin{equation*}
J_C(u)
= \int_\Omega |\nabla u|_x^p d \mu(x) - p \int_\Omega f u d \mu.
\end{equation*}
Thus it is possible to define the solution as a minimizer of the functional $J_C$.
Note that $J_C$ includes the function $u$ and the magnitude of the Cheeger's gradient $|\nabla u|_x$, but not the vector $\nabla u$ itself. Thus this approach of minimizing $J_C$ makes sense also if we use upper gradients instead of Cheeger's gradients.

The paper is organized as follows. Section \ref{sect-prelim} is to recall basic properties of metric measure spaces.
In Section \ref{formulationsection} we discuss how to state the problems in the general metric measure space setting by using the functional $J$.
In Section \ref{sect-existence} we discuss the existence of solutions.
In Section \ref{sect-stability} we discuss the stability of the solutions in the Cheeger's setting, where the space is geodesic.
As a corollary we obtain that the solution is unique.

\section{Preliminaries}
\label{sect-prelim}

Throughout this paper we let $(X,d,\mu)$ be a metric measure space with $\mu$ a Borel regular measure.
For $x \in X$ and $r>0$ the open ball is denoted by $B(x,r) := \{ y \in X : d(x,y) < r \}$. For $a > 0$ we denote $a B(x,r) = B(x,ar)$.
For a $\mu$-measurable set $E \subseteq X$, with $0 < \mu(E) < \infty$, and $f \in L^1(E)$ we denote the \emph{integral average} of $f$ over $E$ by
\begin{equation*}
f_E
:= \vint_E f d \mu
:= \frac{1}{\mu(E)} \int_E f d \mu.
\end{equation*}

For a function $u : X \rightarrow \mathbb{R}$ a Borel measurable function $g : X \rightarrow [0,\infty]$ is an \emph{upper gradient of $u$}, if
\begin{equation}
\label{uppergradient}
|u(\gamma(a)) - u(\gamma(b))|
\leq \int_{\gamma} g(\cdot) ds
\end{equation}
for all nonconstant compact rectifiable curves $\gamma : [a,b] \rightarrow X$.
If the inequality \eqref{uppergradient} is satisfied for all curves except for a curve family that has $p$-modulus zero, then $g$ is a \emph{$p$-weak upper gradient of $u$}.
If a function $u$ has a $p$-weak upper gradient $g \in L^p(X)$, then there exists a unique $p$-weak upper gradient $g_u$ of $u$ with the smallest $L^p$ norm among the $L^p$ norms of all $p$-weak upper gradients of $u$. This function $g_u$ is called the \emph{minimal $p$-weak upper gradient of $u$}.
See \cite{Heinonen2015} 
for more on path integrals, the $p$-modulus of curve families and minimal $p$-weak upper gradients.

\begin{definition}
Let $(X,d,\mu)$ be a metric measure space and $p \geq 1$. We say that $u \in N^{1,p}(X)$, if $u \in L^p(X)$ and there exists a $p$-weak upper gradient $g$ of $u$ such that $g \in L^p(X)$. The space $N^{1,p}(X)$ is known as the \emph{Newtonian space} and it is equipped with the norm
\begin{equation*}
\|u\|_{N^{1,p}(X)}
:= \left( \int_X |u|^p + g_u^p d \mu \right)^{1/p}.
\end{equation*}
\end{definition}

\begin{definition}
Let $(X,d,\mu)$ be a metric measure space, $\Omega \subset X$ and $p \geq 1$. The \emph{Newtonian space with zero boundary values} $N_0^{1,p}(\Omega)$ consists of functions $u \in N^{1,p}(X)$, restricted to $\Omega$, such that $u = 0$ in $X \setminus \Omega$.
\end{definition}
See \cite{bjornbjorn,nages} for more information on the space $N_0^{1,p}$.

\begin{definition}
Let $(X,d,\mu)$ be a metric measure space and $p \geq 1$. The \emph{capacity} of a set $E \subset X$ is the number
\begin{equation*}
C_p(E)
:= \inf \|u\|_{N^{1,p}(X)}^p,
\end{equation*}
where the infimum is taken over all $u \in N^{1,p}(X)$ such that $u \geq 1$ on $E$.
\end{definition}

\begin{definition}
Let $(X,d,\mu)$ be a metric measure space such that $\mu(B)$ is positive for every ball $B \subset X$.
The space $(X,d,\mu)$ \emph{supports a $(q,p)$-Poincaré inequality} with $p \geq 1$ and $q \geq 1$, if there exist constants $C_P > 0$ and $\lambda \geq 1$ such that for every $u \in L^1(X)$, $x \in X$, $r > 0$ and $g$ an upper gradient of $u$ we have
\begin{equation*}
\left( \vint_{B(x,r)} |u - u_{B(x,r)}|^q d \mu \right)^{1/q}
\leq C_P r \left( \vint_{B(x,\lambda r)} g^p d \mu \right)^{1/p}.
\end{equation*}
\end{definition}

\begin{definition}
A Borel regular measure $\mu$ on the metric space $(X,d)$ is \emph{doubling}, if there exists a constant $C_{\mu} \geq 1$ such that
\begin{equation*}
0 < \mu(B(x,2r)) \leq C_{\mu} \mu(B(x,r)) < \infty
\end{equation*}
for every $x \in X$ and $r > 0$.
\end{definition}

Now we introduce Maz'ya's inequality, which plays an essential role in the estimations discussed in this paper.

\begin{theorem}
\emph{(Maz'ya's inequality, \cite[Theorem 5.53]{bjornbjorn})}
Assume that the space $(X,d,\mu)$ supports a $(q,p)$-Poincaré inequality for some $q \geq p$. For $u \in N_{loc}^{1,p}(X)$, let $S := \{ x \in X : u(x) = 0 \}$. Then for all balls $B = B(x,r)$,
\begin{equation*}
\left( \vint_{2B} |u|^q d\mu \right)^{p/q}
\leq \frac{C(r^p+1)}{C_p(B \cap S)} \int_{2 \lambda B} g_u^p d\mu,
\end{equation*}
where $\lambda$ is the dilation constant in the $(q,p)$-Poincaré inequality.
The constant $C$ depends only on $p$ and the constant $C_P$ from the $(q,p)$-Poincaré inequality.
\end{theorem}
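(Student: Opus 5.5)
Since this is a cited result (Björn--Björn, Theorem 5.53), I would reprove it along the standard lines: combine the $(q,p)$-Poincaré inequality on $2B$ with a capacity test-function argument that controls the mean $u_{2B}$. By the triangle inequality in $L^q(2B,\mu/\mu(2B))$,
\begin{equation*}
\left( \vint_{2B} |u|^q d\mu \right)^{1/q}
\leq \left( \vint_{2B} |u-u_{2B}|^q d\mu \right)^{1/q} + |u_{2B}|,
\end{equation*}
and the Poincaré inequality, applied with $g_u$ (upper gradients can be replaced by $p$-weak upper gradients by the usual approximation), bounds the first term by $C_P 2r(\vint_{2\lambda B} g_u^p\,d\mu)^{1/p}$.

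Next I would bound $|u_{2B}|$. We may assume $u_{2B}\neq 0$ and, replacing $u$ by $|u|$ (which has the same zero set and $g_{|u|}\le g_u$), that $u\ge 0$. Pick a Lipschitz cutoff $\eta$ with $\eta=1$ on $B$, $\operatorname{supp}\eta\subset 2B$, and $g_\eta\le 1/r$. Then test the capacity of $B\cap S$ with
\begin{equation*}
v := \eta\,\frac{u_{2B}-u}{u_{2B}}.
\end{equation*}
On $B\cap S$ we have $u=0$ and $\eta=1$, so $v=1$, and hence $C_p(B\cap S)\le \|v\|_{N^{1,p}(X)}^p$. By the Leibniz rule, $g_v\le |u_{2B}|^{-1}(g_u + r^{-1}|u-u_{2B}|)\chi_{2B}$ and $|v|\le |u_{2B}|^{-1}|u-u_{2B}|\chi_{2B}$. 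Therefore
\begin{equation*}
|u_{2B}|^p C_p(B\cap S)\le C\int_{2B} g_u^p\,d\mu + C(1+r^{-p})\int_{2B}|u-u_{2B}|^p\,d\mu .
\end{equation*}
Since $q\ge p$, Hölder's inequality and the Poincaré inequality give
\begin{equation*}
\int_{2B}|u-u_{2B}|^p\,d\mu\le C r^p\int_{2\lambda B} g_u^p\,d\mu,
\end{equation*}
so that
\begin{equation*}
|u_{2B}|^p\le C\,\frac{1+r^p}{C_p(B\cap S)}\int_{2\lambda B} g_u^p\,d\mu .
\end{equation*}

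To combine the two terms, the Poincaré term must also carry the factor $1/C_p(B\cap S)$. The idea is to estimate the capacity from above by testing with $\eta$ itself:
\begin{equation*}
C_p(B\cap S)\le C_p(B)\le \mu(2B)(1+r^{-p}).
\end{equation*}
Then
\begin{equation*}
r^p \vint_{2\lambda B} g_u^p\,d\mu\le \frac{r^p}{\mu(2B)}\int_{2\lambda B} g_u^p\,d\mu\le \frac{C(1+r^p)}{C_p(B\cap S)}\int_{2\lambda B} g_u^p\,d\mu .
\end{equation*}

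The main obstacle is normalization. The averaged left-hand side must match the unaveraged right-hand side, and the factor $\mu(2B)$ together with the powers of $r$ must be tracked carefully. In particular, the mean bound obtained above is for $|u_{2B}|^p$ itself rather than for an averaged quantity, so I would recheck whether a factor $\mu(2B)$ is hidden in the capacity normalization. I expect the capacity comparison $C_p(B)\lesssim \mu(2B)(1+r^{-p})$ to absorb it, with constants depending only on $p$ and $C_P$.
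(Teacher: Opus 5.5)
Your argument is correct. The paper gives no proof of its own and only cites Björn--Björn, Theorem 5.53; your route is the standard argument behind that result. You use the $(q,p)$-Poincaré inequality on $2B$ for the oscillation, test the capacity of $B\cap S$ with $\eta\,(u_{2B}-u)/u_{2B}$ to control the mean, and use $C_p(B\cap S)\le (1+r^{-p})\mu(2B)$ to give the Poincaré term the factor $(1+r^p)/C_p(B\cap S)$.

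The normalization worry in your last paragraph is unfounded. The quantity $|u_{2B}|^p$, like the left-hand side, is unchanged when $\mu$ is multiplied by a constant, while $C_p(B\cap S)$ and $\int_{2\lambda B} g_u^p\,d\mu$ both scale linearly. So no hidden factor $\mu(2B)$ appears. Your estimates $|u_{2B}|^p\le C(1+r^p)\,C_p(B\cap S)^{-1}\int_{2\lambda B}g_u^p\,d\mu$ and $r^p/\mu(2\lambda B)\le (1+r^p)/C_p(B\cap S)$ already complete the proof, with $C=C(p,C_P)$.
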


The next lemma follows from Maz'ya's inequality and it is used many times in this paper.

\begin{lemma}
\label{mazyalemma}
Let $(X,d,\mu)$ be a metric measure space, where the measure $\mu$ is doubling and the space supports a $(p,p)$-Poincaré inequality with $p \geq 1$.
Let $\Omega \subset X$ be a nonempty bounded open connected set so that $\mu(X \setminus \Omega) > 0$.
Let $u \in N_0^{1,p}(\Omega)$. Then there exists a constant $c_1$ that depends only on $p$, $C_P$, $\Omega$ and $\mu$ such that
\begin{equation*}
\|u\|_{L^p(\Omega)}
\leq c_1 \|g_u\|_{L^p(\Omega)}.
\end{equation*}
\end{lemma}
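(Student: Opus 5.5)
We apply Maz'ya's inequality with $q=p$ on a single ball that contains $\Omega$ and is large enough that the zero set of $u$ inside it has positive capacity.

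\emph{Choice of ball.} Since $\Omega$ is bounded, fix $x_0\in\Omega$ and $r>0$ with $\Omega\subset B(x_0,r)$. We want $B(x_0,r)\setminus\Omega$ to have positive measure. Since $\mu(X\setminus\Omega)>0$ and $X\setminus\Omega=\bigcup_k \big(B(x_0,k)\setminus\Omega\big)$, there is some $k$ with $\mu(B(x_0,k)\setminus\Omega)>0$. Enlarging $r$ to at least $k$, we may therefore assume
\[
\Omega\subset B:=B(x_0,r)\quad\text{and}\quad \mu(B\setminus\Omega)>0.
\]
The choice of $B$ depends only on $\Omega$ and $\mu$.

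\emph{Applying Maz'ya's inequality.} For $u\in N_0^{1,p}(\Omega)$ we have $u\in N^{1,p}(X)\subset N^{1,p}_{loc}(X)$ and $u=0$ on $X\setminus\Omega$. Hence $S=\{u=0\}\supset B\setminus\Omega$, and by monotonicity of capacity
\[
C_p(B\cap S)\ge C_p(B\setminus\Omega)\ge \mu(B\setminus\Omega)>0.
\]
The middle inequality holds because any admissible $v$ with $v\ge1$ on $E$ satisfies $\|v\|^p_{N^{1,p}}\ge\int_E v^p\,d\mu\ge\mu(E)$.

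Maz'ya's inequality with $q=p$ then gives
\[
\frac{1}{\mu(2B)}\int_{2B}|u|^p\,d\mu\le \frac{C(r^p+1)}{\mu(B\setminus\Omega)}\int_{2\lambda B}g_u^p\,d\mu .
\]

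\emph{Localizing to $\Omega$.} Since $u=0$ outside $\Omega$ and $\Omega\subset 2B$, the left integral equals $\int_\Omega|u|^p\,d\mu$.

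For the right side we need $g_u=0$ a.e. on $X\setminus\Omega$. This holds because minimal $p$-weak upper gradients are local: $g_u=0$ a.e. on any set where $u$ is constant (see \cite{Heinonen2015,bjornbjorn}). Therefore $\int_{2\lambda B}g_u^p\,d\mu=\int_\Omega g_u^p\,d\mu$, where $g_u$ is the minimal weak upper gradient of $u$ taken as a function on $X$.

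Taking $p$-th roots yields the lemma with
\[
c_1=\left(\frac{C(r^p+1)\,\mu(2B)}{\mu(B\setminus\Omega)}\right)^{1/p},
\]
which depends only on $p$, $C_P$, $\Omega$ and $\mu$; doubling guarantees $\mu(2B)<\infty$.

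\emph{Main obstacle.} The only delicate point is the positivity of $C_p(B\cap S)$. It comes from choosing $B$ large enough to capture positive measure of the complement, and from the trivial lower bound of capacity by measure. Everything else is bookkeeping.
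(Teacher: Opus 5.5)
Your proof is correct and follows essentially the same route as the paper. You apply Maz'ya's inequality with $q=p$ on a ball $B$ large enough that $\Omega\subset 2B$ and $\mu(B\setminus\Omega)>0$, bound $C_p(B\cap S)\ge\mu(B\setminus\Omega)$, and use that $g_u=0$ a.e.\ outside $\Omega$, which gives the same constant $c_1$. You also justify two points the paper leaves implicit: the existence of such a radius, via countable subadditivity, and the lower bound of capacity by measure.
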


\begin{proof}
We use Maz'ya's inequality to prove this result. By considering the zero extension of $u$, we have $X \setminus \Omega \subset S$. We choose $x \in \Omega$ and the radius $r$ so big that $\Omega \subset 2B$ and $\mu(B \setminus \Omega) > 0$. Then $C_p(B \cap S) \geq C_p(B \setminus \Omega) \geq \mu(B \setminus \Omega)$ and thus
\begin{align*}
\|u\|_{L^p(\Omega)}^p
&= \int_{2B} |u|^p d\mu
\leq \frac{C(r^p+1)\mu(2B)}{C_p(B \cap S)} \int_{2 \lambda B} g_u^p d\mu
\\
&\leq \frac{C(r^p+1)\mu(2B)}{\mu(B \setminus \Omega)} \int_{\Omega} g_u^p d\mu.
\end{align*}
Note that $g_u = 0$ $\mu$-almost everywhere in the complement of $\Omega$, because $u=0$ in the complement of $\Omega$, see \cite[Corollary 2.21]{bjornbjorn}.
Thus by denoting
\begin{equation*}
c_1 = \left( \frac{C(r^p+1)\mu(2B)}{\mu(B \setminus \Omega)} \right)^{1/p},
\end{equation*}
we get the result.
\end{proof}

\subsection*{General hypothesis}

From now on in this paper $(X,d,\mu)$ is a complete metric measure space, where the measure $\mu$ is doubling and the space supports a $(p,p)$-Poincaré inequality with $1 < p < \infty$.
We assume that $\Omega \subset X$ is a nonempty bounded open connected set so that $\mu(X \setminus \Omega) > 0$. 

\section{Formulation of the Dirichlet problem in metric measure spaces}
\label{formulationsection}

Here we will extend the $p$-Poisson equation into the nonsmooth metric measure setting. To do that we need to define the notion of Cheeger's gradient.
The original idea appears in \cite{Cheeger1999}. However, the explicit statement of the result that we use comes from \cite{Heinonen2015}.
We will provide a summary of the construction of the theorem.
The idea is to construct a differentiable structure like a manifold but for metric measure spaces. 
Consider the following definitions.

\begin{definition}
A function $u \in N_{\loc}^{1,p}\bp{X}$ is \emph{asymptotic $p$-harmonic at $x_0 \in~X$}, if
\begin{equation*}
\lim_{r \to 0}
\bp{
\intbar_{B\bp{x_0,r}} g_u^p \; d\mu
-
\inf_{\varphi \in N_c^{1,p}\bp{B\bp{x_0,r}}}
\intbar_{B\bp{x_0,r}} g_{u+\varphi}^p \; d\mu
}
=
0.
\end{equation*}
Here $N_c^{1,p}(B)$ is the space of Newtonian functions that have compact support in $B$.
\end{definition}

\begin{definition}
A function $u\in N_{\loc}^{1,p}(X)$ is \emph{asymptotically generalized linear at $x_0 \in X$},
\index{Asymptotic!generalized linear}
\index{Function!asymptotic generalized linear}
if $u$ is asymptotic $p$-harmonic at $x_0$ and $x_0$ is a Lebesgue point of $g_u^p$. 
\end{definition}

\begin{theorem}
\label{Existence of differentiable structure on mms throu asym gen linear}
\emph{(\cite[Theorem 13.4.4]{Heinonen2015})}
If $X$ is geodesic, there exist a positive integer $N$ and a countable collection of measurable sets $\left\{U_\alpha\right\}_\alpha$ with $\mu\left(U_\alpha\right)>~0$ satisfying $\mu\left(X \backslash \bigcup_\alpha U_\alpha\right)=0$, and, for each $U_\alpha$, there is a collection of 1-Lipschitz functions $f_1^\alpha, \ldots, f_{N(\alpha)}^\alpha$ on $X$, where $N(\alpha) \leq N$ for each $\alpha$, such that
\begin{enumerate}
    \item 
    each $f_j^\alpha, j=1, \ldots, N(\alpha)$, is asymptotically generalized linear at each point of $U_\alpha$,
    
\item for each $\vec{a} \in \mathbb{R}^{N(\alpha)} \backslash\{\overrightarrow{0}\}$ and each $x_0 \in U_\alpha$ we have $g_{\vec{a} \cdot \overrightarrow{f^\alpha}}\left(x_0\right)>0$,

\item whenever $u: X \rightarrow \mathbb{R}$ is an L-Lipschitz function, there is a set $V_\alpha(u) \subset U_\alpha$, with $\mu\left(U_\alpha \backslash V_\alpha(u)\right)=0$, and Borel functions $b_j^\alpha(u): V_\alpha(u) \rightarrow \mathbb{R}$, $j=1, \ldots, N(\alpha)$, such that for $x_0 \in V_\alpha(u)$ we have $g_{\vec{a} \cdot \vec{f}^\alpha-u}\left(x_0\right)=0$ if and only if

$$
\vec{a}=\vec{b}^\alpha(u)\left(x_0\right)=\left(b_1^\alpha(u)\left(x_0\right), \ldots, b_{N(\alpha)}^\alpha(u)\left(x_0\right)\right) .
$$
    
\end{enumerate}
\end{theorem}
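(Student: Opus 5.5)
This is Cheeger's differentiability theorem in the form of \cite[Theorem 13.4.4]{Heinonen2015}, so in the paper it is quoted rather than proved. If I had to reconstruct it, I would follow Cheeger's construction through asymptotically generalized linear functions, in four steps.

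\emph{Step 1: gradients behave like seminorms.} Doubling together with a $(p,p)$-Poincaré inequality (and the geodesic hypothesis) gives Keith's $\mathrm{Lip}$--$\mathrm{lip}$ comparison. From this, for Lipschitz $u$ one gets $g_u \simeq \mathrm{Lip}\, u$ almost everywhere. I would then show that at a point $x_0$ where all the relevant functions are asymptotically generalized linear, the map
\begin{equation*}
\vec a \mapsto g_{\vec a\cdot \vec f}(x_0)
\end{equation*}
is a seminorm. The argument: blow up at $x_0$. Lebesgue points of $g_u^p$ combined with asymptotic $p$-harmonicity force the blow-ups to be $p$-harmonic functions with constant minimal upper gradient, that is, generalized linear functions. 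This makes the quantity homogeneous and subadditive.

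\emph{Step 2: finite dimension.} Next I would show that on a set of positive measure, the number of functions $f_1,\dots,f_k$ that are independent in the sense of item (2) is bounded by some $N$ depending only on the doubling and Poincaré data. The independence condition says $g_{\vec a\cdot\vec f}(x_0)>0$ for every $\vec a\neq 0$. The approach is to compare the volume growth of the blow-up with the linear growth of $k$ independent generalized linear functions. Doubling bounds the number of $\epsilon$-separated values, which forces $k\le N$, with $N$ comparable to $\log_2 C_\mu$. This is the main obstacle: it requires the blow-up and tangent-cone machinery, or equivalently Cheeger's polynomial-growth argument for generalized linear functions.

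\emph{Step 3: choosing the charts.} With $N$ fixed, I would pick measurable sets $U_\alpha$ by a maximality (exhaustion) argument. At almost every point, take a maximal independent family among countably many $1$-Lipschitz functions, for example distance functions to a countable dense set. The measurability of each $U_\alpha$ follows from the Borel dependence of $g_{\vec a\cdot\vec f}$ on $\vec a$ over a countable dense set of coefficients.

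\emph{Step 4: Borel coefficients.} Finally, for a Lipschitz $u$, maximality implies that $u,f_1,\dots,f_{N(\alpha)}$ are dependent almost everywhere on $U_\alpha$. This yields coefficients $\vec b$ with
\begin{equation*}
g_{\vec b\cdot\vec f-u}(x_0)=0 .
\end{equation*}
Uniqueness of $\vec b$ follows from item (2) and subadditivity: if two vectors both worked, their difference $\vec c$ would satisfy $g_{\vec c\cdot\vec f}(x_0)=0$, forcing $\vec c=0$. Borel measurability of $\vec b$ comes from writing it as a minimizer over a countable dense set of $\vec a$.
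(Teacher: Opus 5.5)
The paper gives no proof of this statement; it is imported from \cite[Theorem 13.4.4]{Heinonen2015}, so the citation is all that is needed here. Your reconstruction, however, has a genuine gap between Step 3 and Step 4. In Step 3 you make $f_1,\dots,f_{N(\alpha)}$ maximal only within a fixed countable collection (distance functions to a dense set). In Step 4 you then invoke maximality for an arbitrary Lipschitz $u$, which is not in that collection. Nothing you have said excludes $(\vec{f}^{\alpha},u)$ being independent on a subset of $U_\alpha$ of positive measure, so item (3) does not follow. Claiming that the differentials of distance functions already span everything would be a separate, nontrivial statement.

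The exhaustion has to run over \emph{all} Lipschitz functions. For $k=N,N-1,\dots,1$, choose countably many disjoint sets of maximal total measure, on each of which some $k$-tuple satisfies (1) and (2), and pass to the complement. A positive-measure subset of $U_\alpha$ where $(\vec{f}^{\alpha},u)$ satisfies (2) then contradicts maximality at level $N(\alpha)+1$. That contradiction, and item (1) itself (which you never check for your chart functions), both need a lemma missing from your sketch: \emph{every Lipschitz function is asymptotically generalized linear at $\mu$-a.e.\ point.} Lebesgue points are a.e.\ by doubling. For asymptotic $p$-harmonicity, argue by contradiction. Suppose the deficit in the definition exceeds $\delta>0$ at arbitrarily small scales on a set $E$ of positive measure. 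On each ball of a disjoint Vitali family of such balls, replace $f$ by a competitor $f+\varphi$ realizing the deficit, truncated between $\inf_B f$ and $\sup_B f$. As the radii shrink, these functions converge uniformly to $f$. Yet their energy on a fixed bounded open set is at least $\delta\mu(E)$ below that of $f$, contradicting lower semicontinuity of $\int g_u^p\,d\mu$ under $L^p$ convergence.

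Step 2, the heart of the theorem, is only named, and two points in Step 1 bear on it. First, homogeneity and subadditivity of $\vec a\mapsto g_{\vec a\cdot\vec f}$ are just a.e.\ properties of minimal weak upper gradients and do not need blow-ups. The real issue is a single pointwise representative valid for all $\vec a$ at once. You get this from a countable dense set of $\vec a$ together with $|g_{\vec a\cdot\vec f}-g_{\vec b\cdot\vec f}|\le\sum_j|a_j-b_j|$ a.e. Second, for $p\neq 2$ a linear combination of asymptotically generalized linear functions is not a priori asymptotically generalized linear. So your blow-up argument reaches $\vec a\cdot\vec f$ only through the lemma above, applied for $\vec a$ in that dense set.

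With these in place the counting goes as follows. On a tangent cone the blow-ups $\vec a\cdot\vec F$ are generalized linear with $g_{\vec a\cdot\vec F}\equiv\|\vec a\|:=g_{\vec a\cdot\vec f}(x_0)$, which is a norm by (2). The Caccioppoli inequality gives $\operatorname{osc}_{B(z,r)}(\vec a\cdot\vec F)\ge c\,r\|\vec a\|$. The Poincaré inequality makes $\vec a\cdot\vec F$ a $C\|\vec a\|$-Lipschitz function. Take an $\varepsilon r$-net $y_1,\dots,y_M$ of $B(z,r)$ with $\varepsilon<c/(2C)$. Then the linear map $\vec a\mapsto\bigl(\vec a\cdot(\vec F(y_i)-\vec F(z))\bigr)_{i=1}^M$ is injective, so $k\le M$, and doubling bounds $M$ in terms of $C_\mu$ and $c/C$. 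This yields an $N$ depending only on the doubling and Poincaré data. The packing count alone does not give $N\approx\log_2 C_\mu$.
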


The function $\vec b^\alpha(u): V_\alpha(u) \rightarrow \mathbb{R}^{N(\alpha)}$ is called a derivative of $u$ and is denoted $D^\alpha u$. 
We may assume without loss of generality that the sets $U_\alpha$ are pairwise disjoint and thus we define
\begin{equation}
\label{Differential in geodesic PI spaces}
    D(u):=\sum_\alpha D^\alpha u \chi_{U_\alpha} .
\end{equation}
The operator $D$ is a linear operator for Lipschitz functions \cite{Heinonen2015}.

\begin{theorem}
\label{Differentiable structure on mms throu asym gen linear properties}
\emph{(\cite[Proposition 13.5.8]{Heinonen2015})}
Suppose that $X$ is geodesic.
Then there is a positive integer $N$ and a bounded linear differential operator
$$
\nabla: N^{1, p}(X) \rightarrow L^p\left(X: \mathbb{R}^N\right)
$$
such that $\nabla(u v)=u \nabla v+v \nabla u$ whenever $u, v \in N^{1, p}(X)$ are Lipschitz functions, 
\begin{equation*}
\left|\nabla u\left(x_0\right)\right|_{x_0} \leq g_u\left(x_0\right) \leq \sqrt{N}\left|\nabla u\left(x_0\right)\right|_{x_0}
\end{equation*}
for almost every $x_0 \in X$, and $\nabla u$ coincides, for Lipschitz functions $u$, with the operator defined in \eqref{Differential in geodesic PI spaces}.
\end{theorem}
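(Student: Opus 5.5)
The plan is to build $\nabla$ first on Lipschitz functions from the structure of Theorem \ref{Existence of differentiable structure on mms throu asym gen linear}, and then extend it to all of $N^{1,p}(X)$ by density. Since the statement is cited from \cite[Proposition 13.5.8]{Heinonen2015}, I only outline the route.

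\textbf{Step 1: a norm on each chart.} Fix a chart $U_\alpha$ and a point $x_0 \in U_\alpha$. For $\vec a \in \R^{N(\alpha)}$ set $\|\vec a\|_{x_0} := g_{\vec a \cdot \vec f^\alpha}(x_0)$. Minimal $p$-weak upper gradients are subadditive and positively homogeneous, $g_{u+v}\le g_u+g_v$ and $g_{cu}=|c|g_u$. Hence $\|\cdot\|_{x_0}$ is a seminorm, and property (2) of the theorem makes it a norm. This needs care, because minimal weak upper gradients are only defined almost everywhere and the conclusion is required for all $\vec a$ simultaneously. I would first use a countable dense set of rational vectors $\vec a$, discarding a null set. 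I would then extend to all $\vec a$ using the Lipschitz bound $|g_{\vec a\cdot\vec f}-g_{\vec b\cdot\vec f}|\le g_{(\vec a-\vec b)\cdot \vec f}\le |\vec a-\vec b|_1$, which holds since each $f_j^\alpha$ is $1$-Lipschitz.

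By John's ellipsoid theorem there is an inner-product norm $|\cdot|_{x_0}$ with $|\vec a|_{x_0}\le \|\vec a\|_{x_0}\le \sqrt{N(\alpha)}\,|\vec a|_{x_0}\le\sqrt N\,|\vec a|_{x_0}$. I would choose this ellipsoid measurably in $x_0$, for instance by a measurable selection of the minimal-volume ellipsoid, which is unique and hence depends continuously on the norm.

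\textbf{Step 2: Lipschitz functions.} For Lipschitz $u$ define $\nabla u := Du$ as in \eqref{Differential in geodesic PI spaces}. Linearity of $D$ on Lipschitz functions gives linearity of $\nabla$ there. The key identity is
\begin{equation*}
g_u(x_0)=\|Du(x_0)\|_{x_0}
\end{equation*}
for a.e.\ $x_0$. One direction is by subadditivity:
\begin{equation*}
g_u \le g_{\vec b\cdot\vec f}+g_{\vec b\cdot \vec f-u}
\qquad\text{and}\qquad
g_{\vec b\cdot\vec f}\le g_u+g_{\vec b\cdot\vec f-u}.
\end{equation*}
Evaluating at $x_0$ with $\vec b=D^\alpha u(x_0)$ and using $g_{\vec b\cdot\vec f-u}(x_0)=0$ from (3) gives equality.

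\textbf{Main obstacle.} The vector $\vec b$ in Step 2 varies with $x_0$, so the "evaluation at $x_0$" is not literally a statement about a single fixed function. I would handle it by approximating $D^\alpha u$ by simple functions with rational values and using locality of minimal weak upper gradients on measurable sets ($g_v=g_w$ a.e.\ on $\{v=w\}$). This is where the asymptotic generalized linearity and the Lebesgue point property of $g_{f_j}^p$ are used. Combined with Step 1, the identity gives $|\nabla u|_{x_0}\le g_u(x_0)\le\sqrt N|\nabla u|_{x_0}$.

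\textbf{Step 3: product rule.} For Lipschitz $u,v$, the Leibniz inequality for upper gradients gives
\begin{equation*}
g_{uv-(u\vec b_v+v\vec b_u)\cdot \vec f}(x_0)\le |u(x_0)|\,g_{v-\vec b_v\cdot\vec f}(x_0)+|v(x_0)|\,g_{u-\vec b_u\cdot\vec f}(x_0)=0.
\end{equation*}
Here I use locality again, together with the fact that $u,v$ are continuous, so they can be frozen near $x_0$ up to small error. The uniqueness in (3) then yields $\nabla(uv)=u\nabla v+v\nabla u$.

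\textbf{Step 4: extension.} Under doubling and the Poincaré inequality, Lipschitz functions are dense in $N^{1,p}(X)$ (\cite{bjornbjorn,Heinonen2015}). Take Lipschitz $u_k\to u$ in $N^{1,p}$. Then $|\nabla u_k-\nabla u_m|_x\le g_{u_k-u_m}$ a.e., so $(\nabla u_k)$ is Cauchy in $L^p(X:\R^N)$; the norms $|\cdot|_x$ are uniformly comparable to the Euclidean norm on each chart. Define $\nabla u$ as the limit. It is independent of the sequence by the same estimate, and it is linear and bounded with $\|\nabla u\|_{L^p}\le\|g_u\|_{L^p}$.

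The pointwise inequalities pass to the limit a.e. along a subsequence, because $g_{u_k}\to g_u$ in $L^p$ (from $|g_{u_k}-g_u|\le g_{u_k-u}$). The agreement with \eqref{Differential in geodesic PI spaces} for Lipschitz $u$ holds by construction.
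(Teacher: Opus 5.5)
\textbf{Verdict: the route is right, but the step you flag as the main obstacle is not resolved by your proposed fix.} The paper gives no argument of its own here; it imports \cite[Proposition 13.5.8]{Heinonen2015}. Your outline follows the standard route behind that result: fibre norms $\vec a\mapsto g_{\vec a\cdot\vec f^\alpha}(x_0)$, John's ellipsoid for the factor $\sqrt N$, the identity $g_u=\|Du\|_{x_0}$ for Lipschitz $u$, and extension by density.

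\textbf{Where the argument breaks.} Properties (2) and (3) of Theorem~\ref{Existence of differentiable structure on mms throu asym gen linear} quantify over all vectors $\vec a$ at one fixed point $x_0$. Your two-sided estimate in Step 2 needs subadditivity and the bound $|g_{\vec a\cdot\vec f}-g_{\vec b\cdot\vec f}|\le|\vec a-\vec b|_1$ to hold at that very point, for the $x_0$-dependent vector $\vec b=D^\alpha u(x_0)$.
\begin{itemize}
\item Knowing these a.e.\ for countably many rational vectors does not reach the value $g_{\vec b(x_0)\cdot\vec f-u}(x_0)$ that (3) controls.
\item Locality ($g_v=g_w$ a.e.\ on $\{v=w\}$) does not help either. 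It only compares two functions on the set where they coincide, and a function glued from the $\vec q_j\cdot\vec f-u$ over the pieces of a simple approximation of $D^\alpha u$ is not in $N^{1,p}$.
\item The same issue affects Step 1. If (2) is read only as ``for each $\vec a\neq0$, a.e.'', the continuous extension from rational vectors need not be positive definite. For example, the field $|a_1\cos x+a_2\sin x|$ on $(0,\pi)$ is positive a.e.\ for each fixed $\vec a\neq0$, yet degenerate at every $x$.
\end{itemize}

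\textbf{The missing idea.} Work with the pointwise representative
\begin{equation*}
\bar g_v(x):=\limsup_{r\to0}\Bigl(\vint_{B(x,r)}g_v^p\,d\mu\Bigr)^{1/p}.
\end{equation*}
This is the reading under which (2) and (3) are meaningful, since asymptotic generalized linearity is formulated at Lebesgue points of $g^p$.
\begin{itemize}
\item Minkowski's inequality on each ball gives $\bar g_{v+w}\le\bar g_v+\bar g_w$, $\bar g_{cv}=|c|\,\bar g_v$ and $|\bar g_{\vec a\cdot\vec f}-\bar g_{\vec b\cdot\vec f}|\le|\vec a-\vec b|_1$ at every point, for all functions and vectors simultaneously.
\item Lebesgue differentiation for doubling measures gives $\bar g_v=g_v$ a.e.
\item Consequently $\|\cdot\|_{x_0}$ is a genuine norm at every $x_0\in U_\alpha$ by (2).
\item Your Step 2 estimate then holds literally at every $x_0\in V_\alpha(u)$.
\item In Step 3, the cross term $(u-u(x_0))(v-v(x_0))$ has a $p$-weak upper gradient bounded by $|u-u(x_0)|\,g_v+|v-v(x_0)|\,g_u\le 2L^2d(\cdot,x_0)$. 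Hence its $\bar g$ vanishes at $x_0$.
\end{itemize}
No approximation, no locality, and no further use of asymptotic generalized linearity is needed.

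\textbf{A smaller point in Step 4.} Uniform comparability of $|\cdot|_x$ with the Euclidean norm on a chart does not follow from the theorem. Only $|\vec a|_x\le|\vec a|_1$ does, and the lower constant coming from (2) may degenerate. It is also unnecessary. Norm $L^p(X:\R^N)$ by $\bigl(\int_X|F(x)|_x^p\,d\mu\bigr)^{1/p}$, as the paper does right after the statement. This space is complete because each fibre norm is positive definite, and your Cauchy argument then goes through unchanged.
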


The existence of an inner product $\langle \cdot,\cdot \rangle_{x_0}$ on $\mathbb{R}^N$, that induces the norm $|\cdot|_{x_0}$, is proved within the proof of \cite[Theorem 13.5.7]{Heinonen2015}.
These theorems provide the existence of the differentiable structure.
However the differentiable structure of Theorem \ref{Existence of differentiable structure on mms throu asym gen linear} is not necessarily unique.
Theorem \ref{Differentiable structure on mms throu asym gen linear properties} shows us that if $u \in N^{1,p}(X)$, then $|\nabla u|_x \in L^p(X)$, where $\nabla u$ is the Cheeger's gradient, which is linear.
In particular
\begin{equation*}
\|g_u\|_{L^p(X)}
\leq \sqrt{N} \|\nabla u\|_{L^p(X)}
:= \sqrt{N} \left( \int_X (|\nabla u(x)|_x)^p d \mu(x) \right)^{1/p}.
\end{equation*}
Therefore, if $u \in N_0^{1,p}(\Omega)$ and $X$ is geodesic, we get from Lemma \ref{mazyalemma} that
\begin{equation}
\label{mazyalemmacheeger}
\|u\|_{L^p(\Omega)}
\leq c_1 \sqrt{N} \|\nabla u\|_{L^p(\Omega)}.
\end{equation}

Now we can define the solution to the Dirichlet problem for the $p$-Poisson equation in the Cheeger's setting.
Here we also replace the Sobolev space in \eqref{Reasonably extension of the boundary condition} with the Newtonian space.

\begin{definition}
\label{solutiondefinitioncheeger}
Suppose that $X$ is geodesic and fix a differentiable structure of Theorem \ref{Existence of differentiable structure on mms throu asym gen linear}. 
A solution (in the Cheeger's setting) for the $p$-Poisson equation with source term $f \in L^{p'}(\Omega)$ is a function $u \in N^{1,p} (\Omega)$ such that
\begin{equation}
\label{Variational formulation of the p-Laplace equation}
    \int_{ \Omega  }|\nabla u|_x^{p-2} \langle \nabla u , \nabla \varphi \rangle_x d \mu(x)
=
    \int_{ \Omega } f \varphi d \mu
\qquad  
\forall \, \varphi \in N_0^{1 , p } ( \Omega ) . 
\end{equation}
Here $\nabla u$ and $\nabla \varphi$ denote Cheeger's gradients.
A solution to the Dirichlet problem for the $p$-Poisson equation (in the Cheeger's setting) with source term $f$ and boundary data $g \in N^{1,p}(\Omega)$ is a function $u \in N^{1,p}(\Omega)$ such that $u-g \in N_0^{1,p}(\Omega)$ and \eqref{Variational formulation of the p-Laplace equation} holds.
\end{definition}

In the Cheeger's setting $u$ being a solution in the sense of the previous definition is actually equivalent to $u$ being a minimizer of the functional
\begin{equation*}
J_C(u)
:= \int_\Omega |\nabla u|_x^p d \mu(x)  - p \int_\Omega f u d \mu.
\end{equation*}
This is proved below by finding the critical points of $J_C$.
More precisely we fix a direction $\varphi \in N_0^{1,p} (\Omega )$ and define the auxiliary function $L : \mathbb{R} \rightarrow \mathbb{R}$ by
\begin{equation}
\label{Definition of L}
L( \varepsilon  )
:= J_C (u + \varepsilon\varphi ) 
= \int_\Omega |\nabla ( u  + \varepsilon \varphi )|_x^{p} - p f ( u + \varepsilon \varphi )d \mu(x).
\end{equation}
We find critical points of $J_C$ by checking the critical points of $L$.
In the next lemma we prove the differentiability of $L$.
\begin{lemma}
\label{Derivative of the auxiliary functional L-lemma}
Let $X$ be geodesic and fix a differentiable structure of Theorem \ref{Existence of differentiable structure on mms throu asym gen linear}.
Let $u \in N^{1,p}(\Omega)$, $f \in L^{p'}(\Omega)$ and $\varphi \in N_0^{1,p} (\Omega )$.
Then the function $L$ is differentiable and furthermore
\begin{equation}
\label{Derivative of the auxiliary functional L}
    L'( \varepsilon )
    =
    p
    \int_\Omega 
        |\nabla ( u  + \varepsilon \varphi )|_x^{p-2} 
        \langle \nabla (u + \varepsilon \varphi ) , \nabla \varphi \rangle_x
    d \mu(x)
    - p \int_\Omega f\varphi d \mu.
\end{equation}
\end{lemma}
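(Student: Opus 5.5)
The plan is to differentiate under the integral sign, working pointwise with the inner product structure $\langle\cdot,\cdot\rangle_x$ on $\mathbb{R}^N$.

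First, the linear term is easy. By linearity of the integral,
\[
p\int_\Omega f(u+\varepsilon\varphi)\,d\mu=p\int_\Omega fu\,d\mu+\varepsilon p\int_\Omega f\varphi\,d\mu .
\]
Both integrals are finite by Hölder's inequality, since $f\in L^{p'}(\Omega)$ and $u,\varphi\in L^p(\Omega)$. So this term is affine in $\varepsilon$, with derivative $p\int_\Omega f\varphi\,d\mu$.

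For the gradient term, linearity of the Cheeger gradient operator from Theorem \ref{Differentiable structure on mms throu asym gen linear properties} gives $\nabla(u+\varepsilon\varphi)=\nabla u+\varepsilon\nabla\varphi$ almost everywhere. Here $\varphi$ is viewed as an element of $N^{1,p}(X)$ via its zero extension, and $u$ is handled similarly or through a suitable extension, since only the values on $\Omega$ matter. Fix $x$ outside a null set and set $v=\nabla u(x)$, $w=\nabla\varphi(x)$. The function $h(\varepsilon)=|v+\varepsilon w|_x^p$ is $C^1$ in $\varepsilon$ because $p>1$ and the norm comes from an inner product: $t\mapsto |t|^p$ is $C^1$ on an inner product space, with derivative $p|t|^{p-2}t$, interpreted as $0$ at $t=0$. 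The chain rule then gives
\[
h'(\varepsilon)=p|v+\varepsilon w|_x^{p-2}\langle v+\varepsilon w,w\rangle_x .
\]

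The main step is justifying the exchange of derivative and integral; I would use the mean value theorem with dominated convergence. Restrict to $|\varepsilon|,|\delta|\le 1$. Cauchy–Schwarz gives
\[
|h'(s)|\le p\,|v+sw|_x^{p-1}|w|_x\le p\,2^{p-1}\bigl(|v|_x^{p-1}+|w|_x^{p-1}\bigr)|w|_x .
\]
Hence the difference quotients $(h(\varepsilon+\delta)-h(\varepsilon))/\delta$ are bounded by the function
\[
G(x)=p\,2^{p-1}\bigl(|\nabla u|_x^{p-1}+|\nabla\varphi|_x^{p-1}\bigr)|\nabla\varphi|_x .
\]
By Hölder with exponents $p'$ and $p$, $G\in L^1(\Omega)$, because $|\nabla u|_x,|\nabla\varphi|_x\in L^p$ by Theorem \ref{Differentiable structure on mms throu asym gen linear properties}. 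Dominated convergence then yields differentiability of the integral and formula \eqref{Derivative of the auxiliary functional L}. The same bound shows $L(\varepsilon)$ is finite for every $\varepsilon$.

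The only delicate points are measurability of $x\mapsto\langle\nabla u,\nabla\varphi\rangle_x$ and the pointwise $C^1$ claim at $v+\varepsilon w=0$. For the first, I would invoke the polarization identity, which expresses the inner product through the measurable norms $|\nabla(u\pm\varphi)|_x$. The second is immediate since $p-1>0$.
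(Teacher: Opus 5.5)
Your proposal is correct and is essentially the paper's argument: the paper cites Folland's Theorem 2.27, which is exactly your mean-value-theorem-plus-dominated-convergence step, with a dominating function obtained from Cauchy--Schwarz and Young instead of your H\"older bound. One small thing to tidy: with $|\varepsilon|,|\delta|\le 1$ the intermediate point only satisfies $|s|\le 2$, so your constant $p\,2^{p-1}$ can be too small when $p>2$, and you only obtain differentiability for $\varepsilon\in(-1,1)$. Both are fixed by letting the constant in $G$ depend on a bound $R\ge|\varepsilon|$, or by using $L_u(\varepsilon_0+\delta)=L_{u+\varepsilon_0\varphi}(\delta)$ to reduce to $\varepsilon_0=0$.
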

\begin{proof}
The claim follows from \cite[Theorem 2.27]{Folland1999}, which allows us to move the derivative inside the integral.
To use this theorem we need to show that the partial derivative of the integrand in \eqref{Definition of L} with respect to $\varepsilon$ is bounded by an integrable function in $\Omega$.
For the second term
\begin{equation*}
     \frac{\partial}{ \partial \varepsilon }  p f ( u + \varepsilon \varphi )
     =
     p f \varphi,
\end{equation*}
which is an integrable function by Hölder's inequality. 
Now we prove the same for the first term.
Notice that
\begin{equation*}
    \frac{\partial }{ \partial \varepsilon }
    |\nabla ( u  + \varepsilon \varphi )|_x^{p}  
=
    p
    |\nabla ( u  + \varepsilon \varphi) |_x^{p-2}  
    \langle \nabla (u + \varepsilon \varphi ) , \nabla \varphi \rangle_x.
\end{equation*}
From the Cauchy-Schwarz and Young's inequalities we get
\begin{equation*}
    \left|
    \frac{\partial }{ \partial \varepsilon }
    |\nabla ( u  + \varepsilon \varphi )|_x^{p}  
    \right|
\leq
    p
    |\nabla ( u  + \varepsilon \varphi) |_x^{p-1}  
        | \nabla \varphi |_x
\leq
    | \nabla \varphi |_x^p
    +
    (p-1)|\nabla ( u  + \varepsilon \varphi) |_x^{p}.
\end{equation*}
Then from the convexity of the function $t \to t^p$, we have
\begin{equation*}
    \left|
    \frac{\partial }{ \partial \varepsilon }
    |\nabla ( u  + \varepsilon \varphi )|_x^{p}  
    \right|
\leq
    | \nabla \varphi |_x^p
    +
    (p-1) 2^{p-1} 
    \left( 
    | \nabla u |_x^{p}
    +
    |\varepsilon|^p | \nabla \varphi |_x^{p} 
    \right)
    .
    \end{equation*}
Therefore $\frac{\partial }{ \partial \varepsilon }|\nabla ( u  + \varepsilon \varphi )|_x^{p}$ is bounded by an integrable function in $\Omega$. Thus we conclude that $L$ is differentiable.
Finally \eqref{Derivative of the auxiliary functional L} follows from the calculations above.
\end{proof}

To find the critical points of $J_C$ we need the following lemma.
\begin{lemma}
\label{notclarkson}
Let $V$ be an inner product space over the field $\mathbb{R}$ with the induced norm $\|\cdot\| := \sqrt{\langle \cdot , \cdot \rangle}$.
Then for all $a,b \in V$ the following estimations hold:
    \begin{equation*}
	\left\langle \|b\|^{p-2} b-\|a\|^{p-2} a , b-a \right\rangle
	\geq 
	\begin{cases}
		2^{1-p} \|b-a\|^p, & p \geq 2, \\ 
		(p-1) (\|a\|+\|b\|)^{p-2}\|b-a\|^2, & p < 2.
	\end{cases}
\end{equation*}
\end{lemma}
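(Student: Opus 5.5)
For $p \geq 2$ I would use a purely algebraic identity. For $p<2$ I would write the left-hand side as an integral along the segment from $a$ to $b$ and bound the integrand from below by a quadratic form. Throughout, write $h := b-a$, $A := \|a\|^{p-2}$ and $B := \|b\|^{p-2}$.

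\emph{Case $p \geq 2$.} Expanding the inner product gives
\begin{equation*}
\left\langle \|b\|^{p-2} b-\|a\|^{p-2} a , b-a \right\rangle
= \|a\|^p+\|b\|^p-(A+B)\langle a,b\rangle .
\end{equation*}
Comparing the coefficients of $\|a\|^2$, $\|b\|^2$ and $\langle a,b\rangle$ shows this equals
\begin{equation*}
\tfrac12 (A+B)\|b-a\|^2+\tfrac12 (B-A)\bigl(\|b\|^2-\|a\|^2\bigr).
\end{equation*}
The second term is nonnegative, because $t\mapsto t^{p-2}$ and $t \mapsto t^2$ are both nondecreasing on $[0,\infty)$. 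It then suffices to show $\tfrac12 (A+B) \geq 2^{1-p}\|h\|^{p-2}$, and I would use $\|h\| \leq \|a\|+\|b\|$ together with a short case split.
\begin{itemize}
\item If $2\le p<3$, subadditivity of the concave map $t\mapsto t^{p-2}$ gives $\|h\|^{p-2}\le A+B$. Hence $\tfrac12(A+B)\geq \tfrac12\|h\|^{p-2} \geq 2^{1-p}\|h\|^{p-2}$.
\item If $p\ge 3$, convexity of $t \mapsto t^{p-2}$ gives $(\|a\|+\|b\|)^{p-2}\le 2^{p-3}(A+B)$. Hence $\tfrac12(A+B)\ge 2^{2-p}\|h\|^{p-2}\ge 2^{1-p}\|h\|^{p-2}$.
\end{itemize}

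\emph{Case $1<p<2$.} Set $F(x) := \|x\|^{p-2}x$ and $x_t := a+th$ for $t \in [0,1]$. If the segment avoids $0$, then $F$ is smooth near it, with
\begin{equation*}
DF(x)k=\|x\|^{p-2}k+(p-2)\|x\|^{p-4}\langle x,k\rangle x .
\end{equation*}
Consequently
\begin{equation*}
\langle F(b)-F(a),h\rangle=\int_0^1 \langle DF(x_t)h,h\rangle\,dt .
\end{equation*}
By Cauchy--Schwarz and $p-2<0$,
\begin{equation*}
\langle DF(x)h,h\rangle=\|x\|^{p-2}\|h\|^2+(p-2)\|x\|^{p-4}\langle x,h\rangle^2\ge (p-1)\|x\|^{p-2}\|h\|^2 .
\end{equation*}
Since $\|x_t\|\le \|a\|+\|b\|$ and the exponent $p-2$ is negative, we have $\|x_t\|^{p-2}\ge(\|a\|+\|b\|)^{p-2}$. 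Integrating over $t\in[0,1]$ gives the claimed bound.

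\emph{Main obstacle.} The delicate point is a segment passing through $0$, where $F$ is not differentiable when $p<2$. I would handle it in one of two ways.
\begin{itemize}
\item Observe that $t\mapsto F(x_t)$ is still absolutely continuous, because the singularity of the derivative is of order $|t-t_0|^{p-2}$ with $p-2>-1$, which is integrable.
\item Alternatively, perturb $a$ slightly within the finite-dimensional span of $a,b$ and a third direction, so that the segment misses $0$, and pass to the limit using continuity of both sides.
\end{itemize}
If $a=b=0$ the statement is trivial; the convention $0^{p-2}\cdot 0=0$ is used throughout.
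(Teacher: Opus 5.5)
Your proposal is correct. For $1<p<2$ it follows the paper's route. The paper takes from Lindqvist's notes the bound $\langle \|b\|^{p-2}b-\|a\|^{p-2}a,b-a\rangle\ge (p-1)\|b-a\|^2\int_0^1\|a+t(b-a)\|^{p-2}\,dt$ and then uses $\|a+t(b-a)\|\le\|a\|+\|b\|$, exactly as you do. You instead derive the integral bound yourself by differentiating $t\mapsto\langle F(x_t),h\rangle$, and you also handle a segment through the origin, which the paper leaves to the reference.

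Your first treatment of that degenerate case suffices. The function is continuous on $[0,1]$ and $C^1$ away from $t_0$, with a nonnegative integrable derivative, so the fundamental theorem of calculus still applies. The perturbation alternative needs an extra step when $\dim V=1$, since $V$ then has no third direction. You could embed $V$ isometrically into $V\oplus\mathbb{R}$, because both sides depend only on $\|a\|$, $\|b\|$ and $\langle a,b\rangle$.

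For $p\ge 2$ the paper gives no argument of its own. It cites Lindqvist and says only that the constant must be lowered from $2^{2-p}$ to $2^{1-p}$. Your identity $\tfrac12(A+B)\|h\|^2+\tfrac12(B-A)(\|b\|^2-\|a\|^2)$ plus the case split turns this into a complete proof. It also gives better constants: $\tfrac12$ for $2\le p<3$ and $2^{2-p}$ for $p\ge 3$. The split is genuinely needed for those constants, because the convexity bound $(\|a\|+\|b\|)^{p-2}\le 2^{p-3}(A+B)$ fails for $2<p<3$ (take $a=0$). If you only want the stated constant $2^{1-p}$, you can avoid the split: $(\|a\|+\|b\|)^{p-2}\le 2^{p-2}\max\{\|a\|,\|b\|\}^{p-2}\le 2^{p-2}(A+B)$ gives $\tfrac12(A+B)\ge 2^{1-p}\|h\|^{p-2}$ for every $p\ge 2$ at once.
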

\begin{proof}
For the case $1 < p < 2$ the following inequality is proved in \cite[Chapter 12]{Lindqvist2017}:
\begin{equation*}
    \left\langle \|b\|^{p-2} b - \|a\|^{p-2} a , b-a \right\rangle
    \geq 
    (p-1) \|b-a\|^2 \int_0^1 \|a + t(b-a)\|^{p-2} dt.
\end{equation*}
From the above inequality and the triangle inequality
\begin{equation*}
\|a + t(b-a)\|
\leq (1-t)\|a\| + t\|b\|
\leq \|a\| + \|b\|
\end{equation*}
together with the fact that $p-2 < 0$, we conclude that the claim holds.

The case $p \geq 2$ is also explained in \cite[Chapter 12]{Lindqvist2017}. There they obtain the larger coefficient $2^{2-p}$ instead of $2^{1-p}$. However there is a mistake in the proof. Their proof works, if we have the coefficient $2^{1-p}$ instead of $2^{2-p}$, which is why we have that here.
Technically in \cite{Lindqvist2017} the results are proven for vectors in $\mathbb{R}^n$ with the dot product, but the same proofs work also for a general inner product space.
\end{proof}

Now we prove the minimizing property of solutions.
\begin{theorem}
\label{Minimizing property of solutions}
Let $X$ be geodesic and fix a differentiable structure of Theorem \ref{Existence of differentiable structure on mms throu asym gen linear}.
Let $f \in L^{p'}(\Omega)$ and $u , g \in N^{1,p}(\Omega)$ such that $u-g \in N_0^{1,p}(\Omega)$. Then $u$ satisfies \eqref{Variational formulation of the p-Laplace equation} if and only if $J_C(u) \leq J_C(v)$ for every $v \in N^{1,p}(\Omega)$ such that $v-g \in N_0^{1,p}(\Omega)$.
\end{theorem}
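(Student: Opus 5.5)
The plan is to reduce both directions to the one-variable function $L$ from \eqref{Definition of L}. Admissible competitors are exactly $v = u+\varphi$ with $\varphi = v-u = (v-g)-(u-g) \in N_0^{1,p}(\Omega)$, since $N_0^{1,p}(\Omega)$ is a vector space. So $J_C(u)\le J_C(v)$ for all admissible $v$ is equivalent to $L(0)\le L(1)$ for every direction $\varphi\in N_0^{1,p}(\Omega)$, where $L$ is built from that $\varphi$. Lemma \ref{Derivative of the auxiliary functional L-lemma} gives the differentiability of $L$ and the formula \eqref{Derivative of the auxiliary functional L}. In particular,
\begin{equation*}
L'(0) = p\int_\Omega |\nabla u|_x^{p-2}\langle \nabla u,\nabla\varphi\rangle_x\,d\mu(x) - p\int_\Omega f\varphi\,d\mu ,
\end{equation*}
so \eqref{Variational formulation of the p-Laplace equation} says precisely that $L'(0)=0$ for every $\varphi\in N_0^{1,p}(\Omega)$.

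\emph{Minimizer implies solution.} Suppose $J_C(u)\le J_C(v)$ for all admissible $v$. Fix $\varphi\in N_0^{1,p}(\Omega)$. Then $u+\varepsilon\varphi$ is admissible for every $\varepsilon\in\mathbb{R}$, so $L$ attains a global minimum at $\varepsilon=0$. Since $L$ is differentiable, $L'(0)=0$, which is \eqref{Variational formulation of the p-Laplace equation}. One should also note that $J_C$ is finite on admissible functions, by Hölder's inequality and $f\in L^{p'}(\Omega)$.

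\emph{Solution implies minimizer.} I would show that $L$ is convex, or more directly that $L'$ is nondecreasing. For $\varepsilon_1<\varepsilon_2$ set $a=\nabla(u+\varepsilon_1\varphi)(x)$ and $b=\nabla(u+\varepsilon_2\varphi)(x)$. Linearity of the Cheeger gradient gives $b-a=(\varepsilon_2-\varepsilon_1)\nabla\varphi(x)$. Hence
\begin{equation*}
L'(\varepsilon_2)-L'(\varepsilon_1)=\frac{p}{\varepsilon_2-\varepsilon_1}\int_\Omega \bigl\langle |b|_x^{p-2}b-|a|_x^{p-2}a,\, b-a\bigr\rangle_x \,d\mu(x)\ge 0,
\end{equation*}
since the integrand is pointwise nonnegative by Lemma \ref{notclarkson}, applied in the inner product space $(\mathbb{R}^N,\langle\cdot,\cdot\rangle_x)$. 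Thus $L'$ is nondecreasing. If $u$ is a solution, then $L'(0)=0$, so $L'\ge 0$ on $[0,1]$. By the mean value theorem, $L(1)\ge L(0)$, that is, $J_C(u+\varphi)\ge J_C(u)$. Since every admissible $v$ has this form, $u$ is a minimizer.

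The main technical point is the justification in the second direction. One must check that the pointwise monotonicity inequality from Lemma \ref{notclarkson} holds with the $x$-dependent inner product; this works because the lemma is stated for an arbitrary inner product space and is applied at almost every $x$ separately. One must also check that all integrals are finite; this is already covered by the integrable bound in the proof of Lemma \ref{Derivative of the auxiliary functional L-lemma}. Alternatively, one could avoid derivatives and use the convexity of $\xi\mapsto|\xi|_x^p$ directly, via $|b|_x^p\ge |a|_x^p+p|a|_x^{p-2}\langle a,b-a\rangle_x$ with $a=\nabla u$ and $b=\nabla v$. Integrating this and using \eqref{Variational formulation of the p-Laplace equation} with $\varphi=v-u$ gives $J_C(v)\ge J_C(u)$ in one line. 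I would keep this as the backup argument.
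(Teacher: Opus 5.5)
Your proposal is correct and follows essentially the same route as the paper. Necessity comes from $L'(0)=0$ at the minimum. Sufficiency comes from showing $L'(\varepsilon)\ge 0$ for $\varepsilon>0$ via Lemma \ref{notclarkson}; the paper does this by substituting \eqref{Variational formulation of the p-Laplace equation} for the $f$-term, which is exactly your monotonicity computation with $\varepsilon_1=0$. Your backup argument via the pointwise convexity inequality $|b|_x^p\ge|a|_x^p+p|a|_x^{p-2}\langle a,b-a\rangle_x$ is also valid, and it avoids differentiating $L$ altogether.
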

\begin{proof}
First assume that $u$ satisfies \eqref{Variational formulation of the p-Laplace equation} and let $v \in N^{1,p}(\Omega)$ such that $v-g \in N_0^{1,p}(\Omega)$.
To prove that $J_C(u) \leq J_C(v)$ we let $\varphi = v-u$.
Then we have $\varphi = (v-g)-(u-g) \in N_0^{1,p}(\Omega)$ and $L(1) = J_C(v)$.
For every positive $\varepsilon$ we get from \eqref{Derivative of the auxiliary functional L}
\begin{align*}
L'(\varepsilon)
&= \int_{\Omega} p |\nabla(u + \varepsilon \varphi)|_x^{p-2} \langle \nabla(u+\varepsilon \varphi) , \nabla \varphi \rangle_x d \mu(x) - p \int_{\Omega} f \varphi d \mu
\\
&= \int_{\Omega} p |\nabla(u + \varepsilon \varphi)|_x^{p-2} \langle \nabla(u+\varepsilon \varphi) , \nabla \varphi \rangle_x d \mu(x)
\\
&- p \int_{\Omega} |\nabla u|_x^{p-2 } \langle \nabla u , \nabla \varphi \rangle_x d \mu(x)
\\
= \frac{p}{\varepsilon} &\int_{\Omega} \langle |\nabla(u + \varepsilon \varphi)|_x^{p-2} \nabla(u + \varepsilon \varphi) - |\nabla u|_x^{p-2} \nabla u , \nabla(u + \varepsilon \varphi) - \nabla u \rangle_x d \mu(x)
\\
&\geq 0,
\end{align*}
where in the final step we used Lemma \ref{notclarkson}.
In particular this implies that $L(1) \geq L(0)$, which is what we wanted to show.

Now suppose that $J_C(u) \leq J_C(v)$ for every $v \in N^{1,p}(\Omega)$ such that $v-g \in N_0^{1,p}(\Omega)$. Let $\varphi \in N_0^{1,p}(\Omega)$. Then by setting $v = u + \varepsilon \varphi$ we have $v - g \in N_0^{1,p}(\Omega)$ for every $\varepsilon \in \mathbb{R}$.
Thus $L(0) = J_C(u) \leq J_C(v) = L(\varepsilon)$ for every $\varepsilon$, and $L$ attains its minimum at 0. Therefore we must have $L'(0)=0$ and thus
\begin{equation*}
L'(0)
= \int_{\Omega} p |\nabla u|_x^{p-2} \langle \nabla u , \nabla \varphi \rangle_x d \mu(x) - p \int_{\Omega} f \varphi d \mu
= 0,
\end{equation*}
and \eqref{Variational formulation of the p-Laplace equation} follows from this.
\end{proof}

This approach of minimizing the functional $J_C$ has the benefit that $J_C$ only includes the function $u$ and the magnitude of the Cheeger's gradient $|\nabla u|_x$, but not the vector $\nabla u$ itself.
Thus this approach also makes sense even if we don't have a Cheeger differentiable structure, so even if $X$ is not geodesic. In this case we consider the minimal $p$-weak upper gradient instead of the Cheeger's gradient. This motivates the following definition of solutions to the Dirichlet problem.
\begin{definition}
\label{solutionmindef}
Let $f \in L^{p'}(\Omega)$ and define the functional $J : N^{1,p}(\Omega) \to \R$ by
\begin{equation*}
J(u)
:= \int_\Omega g_u^p d \mu - p \int_\Omega f u d \mu.
\end{equation*}
Here $g_u$ is the minimal $p$-weak upper gradient of $u \in N^{1,p}(\Omega)$.
A solution to the Dirichlet problem for the $p$-Poisson equation with source term $f$ and boundary data $g \in N^{1,p}(\Omega)$ is defined as a function $u \in N^{1,p}(\Omega)$ such that $u-g \in N_0^{1,p}(\Omega)$ and $u$ minimizes $J$, i.e. $J(u) \leq J(v)$ for every $v \in N^{1,p}(\Omega)$ such that $v-g \in N_0^{1,p}(\Omega)$.
\end{definition}

\section{Existence of solutions}
\label{sect-existence}

In this section we prove that a solution in the sense of Definition \ref{solutionmindef} exists. In particular we don't need to assume that the space is geodesic.

\begin{theorem}
Let $f \in L^{p'}(\Omega)$ and $g \in N^{1,p}(\Omega)$.
There exists a solution $u_0$ to the Dirichlet problem for the $p$-Poisson equation, that is to say $u_0 \in~N^{1,p}(\Omega)$ such that $J(u_0) = \inf\{ J(u) : u-g \in N_0^{1,p}(\Omega) \}$ and $u_0-g \in N_0^{1,p}(\Omega)$.
\end{theorem}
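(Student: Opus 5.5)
We use the direct method of the calculus of variations on the affine class $\mathcal{K} := \{ u \in N^{1,p}(\Omega) : u-g \in N_0^{1,p}(\Omega)\}$. The plan has three steps: show $I := \inf_{\mathcal{K}} J$ is finite, show minimizing sequences are bounded in $N^{1,p}$, and pass to a limit that stays in $\mathcal{K}$ and is lower semicontinuous for $J$. Throughout we write $u = g + w$ with $w \in N_0^{1,p}(\Omega)$, so that Lemma \ref{mazyalemma} applies to $w$.

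\textbf{Coercivity.} For $u = g+w \in \mathcal{K}$, subadditivity of minimal weak upper gradients gives $g_w \le g_u + g_g$. Hence $\|g_u\|_{L^p(\Omega)} \ge \|g_w\|_{L^p(\Omega)} - \|g_g\|_{L^p(\Omega)}$. Hölder's inequality and Lemma \ref{mazyalemma} give
\begin{equation*}
\Bigl| p\int_\Omega f u \, d\mu \Bigr| \le p\|f\|_{L^{p'}(\Omega)}\bigl(\|g\|_{L^p(\Omega)} + c_1\|g_w\|_{L^p(\Omega)}\bigr).
\end{equation*}
It follows that $J(u) \ge (\|g_w\|_{L^p} - C)^p - C'(1+\|g_w\|_{L^p})$ whenever $\|g_w\|_{L^p} \ge C$. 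Since $p>1$, this lower bound tends to $+\infty$ as $\|g_w\|_{L^p} \to \infty$. Consequently $I > -\infty$, and $I<\infty$ because $J(g)<\infty$. For a minimizing sequence $u_k = g + w_k$, the norms $\|g_{w_k}\|_{L^p}$ are bounded, and by Lemma \ref{mazyalemma} so are $\|w_k\|_{L^p}$. Thus $(w_k)$ is bounded in $N^{1,p}(X)$, extended by zero outside $\Omega$.

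\textbf{Compactness and closedness.} $L^p$ is reflexive for $1<p<\infty$. We therefore pass to a subsequence with $w_k \rightharpoonup w$ weakly in $L^p(X)$ and $g_{w_k} \rightharpoonup G$ weakly in $L^p(X)$. By Mazur's lemma, convex combinations $\tilde w_k$ of the tail $\{w_j\}_{j\ge k}$ converge strongly in $L^p$ to $w$. The same convex combinations of the gradients, which are $p$-weak upper gradients of $\tilde w_k$, converge strongly to $G$. The standard closure result for Newtonian spaces (Fuglede's lemma, cf.\ \cite{bjornbjorn}) then shows that a representative of $w$ lies in $N^{1,p}(X)$ with $G$ as a $p$-weak upper gradient. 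We may take this representative to vanish outside $\Omega$: the $\tilde w_k$ vanish there, and after passing to a further subsequence the convergence holds pointwise outside a set of capacity zero. Hence $w \in N_0^{1,p}(\Omega)$ and $u_0 := g+w \in \mathcal{K}$.

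\textbf{Lower semicontinuity.} Apply the same Mazur argument to the pairs $(u_k, g_{u_k})$ instead, and let $\tilde u_k = g + \tilde w_k$ be the resulting convex combinations. We get that $\tilde u_k \to u_0$ in $L^p$, and that the combinations $\tilde G_k$ of the $g_{u_j}$ converge in $L^p$ to some $H$ which is a $p$-weak upper gradient of $u_0$. Then
\begin{equation*}
\int_\Omega g_{u_0}^p \le \int_\Omega H^p \le \liminf_k \int_\Omega g_{u_k}^p,
\end{equation*}
where the second inequality is weak lower semicontinuity of the norm. The linear term $p\int_\Omega f u_k$ converges to $p\int_\Omega f u_0$ because $u_k \rightharpoonup u_0$ weakly in $L^p(\Omega)$ and $f \in L^{p'}(\Omega)$. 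Therefore $J(u_0) \le \liminf_k J(u_k) = I$, so $u_0$ is a minimizer.

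An alternative to the last step is to note that $J$ is convex, since $g_{tu+(1-t)v} \le t g_u + (1-t) g_v$, and that $J(\tilde u_k) \le \sup_{j\ge k} J(u_j) \to I$. Either way, the main obstacle is the closedness step: identifying the weak limit as a Newtonian function with zero boundary values in the pointwise, capacity-sensitive sense of $N_0^{1,p}$. I would handle it by citing the Fuglede/Mazur closure lemma from \cite{bjornbjorn}, or equivalently the fact that $N_0^{1,p}(\Omega)$ is a closed subspace of the reflexive Banach space $N^{1,p}(X)$. Reflexivity of $N^{1,p}(X)$ holds under doubling and the Poincaré inequality.
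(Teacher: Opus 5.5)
Your proposal is correct and follows essentially the same route as the paper. Both arguments bound $J$ from below and bound minimizing sequences via Lemma~\ref{mazyalemma} and $g_{u-g}\le g_u+g_g$. Both take Mazur convex combinations of the functions together with their upper gradients, use quasieverywhere convergence of combinations of the zero-extended $u_k-g$ to get zero boundary values, and finish with lower semicontinuity of the gradient term plus convergence of $\int_\Omega f u_k\,d\mu$.

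There are three small points to tidy. First, your claim that a subsequence of the Mazur combinations already converges quasieverywhere is true, but it needs the Fuglede argument (equi-absolute continuity along $p$-a.e.\ curve); the paper instead cites \cite[Lemma 6.2]{bjornbjorn}. Second, the convex combinations should be chosen simultaneously for $(w_k,g_{w_k},g_{u_k})$, so that $\tilde u_k=g+\tilde w_k$ holds literally. Third, the representative of $u_0$ that carries $H$ must be identified with $g+w$ quasieverywhere, via \cite[Proposition 1.59]{bjornbjorn}, as the paper does.
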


\begin{proof}
The first step is to show that the functional $J$ is bounded from below. Thus assume that $u-g \in N_0^{1,p}(\Omega)$. By using Hölder's inequality and Minkowski's inequality we get
\begin{equation*}
\left| \int_{\Omega} f u d\mu \right|
\leq \|f\|_{L^{p'}(\Omega)} \|u\|_{L^p(\Omega)}
\leq \|f\|_{L^{p'}(\Omega)} (\|u-g\|_{L^p(\Omega)} + \|g\|_{L^p(\Omega)}).
\end{equation*}
Next we use Lemma \ref{mazyalemma} for the function $u-g$.
A useful property of upper gradients is that $g_u + g_g$ is a $p$-weak upper gradient of $u-g$. Thus we have
\begin{equation*}
\|u-g\|_{L^p(\Omega)}
\leq c_1 \|g_{u-g} \|_{L^p(\Omega)}
\leq c_1 \|g_u+g_g\|_{L^p(\Omega)}
\leq c_1 \|g_u\|_{L^p(\Omega)} + c_1 \|g_g\|_{L^p(\Omega)}.
\end{equation*}
By putting these estimates into the functional $J(u)$, we get
\begin{align*}
J(u)
&= \|g_u\|_{L^p(\Omega)}^p - p \int_{\Omega} f u d\mu
\\
&\geq \|g_u\|_{L^p(\Omega)}^p - p \|f\|_{L^{p'}(\Omega)} (c_1 \|g_u\|_{L^p(\Omega)} + c_1 \|g_g\|_{L^p(\Omega)} + \|g\|_{L^p(\Omega)})
\\
&\geq (1-p) ( c_1 \|f\|_{L^{p'}(\Omega)})^{\frac{p}{p-1}} - p \|f\|_{L^{p'}(\Omega)} (c_1\|g_g\|_{L^p(\Omega)} + \|g\|_{L^p(\Omega)}).
\end{align*}
The last inequality is obtained by finding the minimum of the function $[0,\infty) \ni t \to t^p - p c_1 \|f\|_{L^{p'}(\Omega)} t$ by using derivatives.
Thus we have found a lower bound for $J(u)$ and therefore
$$ I : =  \inf\{J(u) : u-g \in N_0^{1,p}(\Omega)\} > -\infty.$$
Also clearly $I \leq J(g) < \infty$.

Now we will prove that there exists $u_0 \in N^{1,p}(\Omega)$ so that $u_0-g \in N_0^{1,p}(\Omega)$ and $u_0$ reaches the infimum.
Let $\{ u_n \}_{n=1}^{\infty}$ be a sequence in $N^{1,p}(\Omega)$ such that $u_n - g \in N_0^{1,p}(\Omega)$ for every $n$ and $J(u_n)\to I$.

By passing to a subsequence if necessary, we can assume that $I \leq J(u_n) \leq I + 1$ for every $n$. Then
\begin{equation*}
\|g_{u_n}\|_{L^p(\Omega)}^p
\leq I + 1 + p \int_{\Omega} f u_n d\mu.
\end{equation*}
Since $u_n - g \in N_0^{1,p}(\Omega)$, we get the same way as in the proof of the lower bound for $J(u)$ that
\begin{equation*}
\left| \int_{\Omega} f u_n d\mu \right|
\leq \|f\|_{L^{p'}(\Omega)} (\|g\|_{L^p(\Omega)} + c_1 \|g_{u_n}\|_{L^p(\Omega)} + c_1 \|g_g\|_{L^p(\Omega)}).
\end{equation*}
Therefore
\begin{equation*}
\|g_{u_n}\|_{L^p(\Omega)}^p
\leq |I| + 1 + p \|f\|_{L^{p'}(\Omega)} (\|g\|_{L^p(\Omega)} + c_1 \|g_{u_n}\|_{L^p(\Omega)} + c_1 \|g_g\|_{L^p(\Omega)}).
\end{equation*}
If $\|g_{u_n}\|_{L^p(\Omega)} > 1$, we get by dividing both sides with $\|g_{u_n}\|_{L^p(\Omega)}$
\begin{equation*}
\|g_{u_n}\|_{L^p(\Omega)}^{p-1}
\leq |I| + 1 + p \|f\|_{L^{p'}(\Omega)} (\|g\|_{L^p(\Omega)} + c_1 + c_1 \|g_g\|_{L^p(\Omega)}),
\end{equation*}
which is trivially true also if $\|g_{u_n}\|_{L^p(\Omega)} \leq 1$.
For the $L^p$ norm of $u_n$ we get as in the proof of the lower bound for $J(u)$ that
\begin{equation*}
\|u_n\|_{L^p(\Omega)}
\leq \|g\|_{L^p(\Omega)} + c_1 \|g_{u_n}\|_{L^p(\Omega)} + c_1 \|g_g\|_{L^p(\Omega)}.
\end{equation*}
This shows that the sequence $u_n$ is bounded in the $N^{1,p}(\Omega)$ norm.

Since $g_{u_n}$ is bounded in the $L^p$ norm, by passing to a subsequence if necessary, we can assume that $\| g_{u_n} \|_{L^p(\Omega)}$ converges.
Then because $\| g_{u_n} \|_{L^p(\Omega)}$ converges and $J(u_n)$ converges, $\int_{\Omega} f u_n d\mu$ also converges.

Since $p>1$, we have that $L^p(\Omega)$ is reflexive \cite{Heinonen2015}, and so we can apply Mazur's lemma \cite[Theorem 7.3.8]{Heinonen2015}.
Then there exists a sequence of convex combinations of $\{ u_n \}_{n=1}^{\infty}$, denoted $\{ v_n \}_{n=1}^{\infty}$, and a sequence of convex combinations of $g_{u_n}$, denoted $\rho_n$, and functions $u_0 \in N^{1,p}(\Omega)$ and $\rho \in L^p(\Omega)$ so that $\rho$ is a $p$-weak upper gradient of $u_0$, $\rho_n$ is an upper gradient of $v_n$, $v_n$ converges to $u_0$ in the $L^p(\Omega)$ norm and $\rho_n$ converges to $\rho$ in the $L^p(\Omega)$ norm.

We will show that $J(u_0)=I$, but first we show that $u_0-g \in N_0^{1,p}(\Omega)$.
Notice that $v_n-g \in N_0^{1,p}(\Omega)$ for every $n$. By extending $v_n-g$ as zero in the complement of $\Omega$, we get $v_n-g \in N^{1,p}(X)$. Thus
\begin{equation*}
\| v_n - g \|_{N^{1,p}(X)}
= \| v_n - g \|_{N^{1,p}(\Omega)}
\leq \| g \|_{N^{1,p}(\Omega)} + C,
\end{equation*}
where $C$ is the upper bound of $\| u_n \|_{N^{1,p}(\Omega)}$. Hence the sequence $(v_n-g)_{n=1}^{\infty}$ is bounded in the $N^{1,p}(X)$ norm.
Thus we get from \cite[Lemma 6.2]{bjornbjorn} that there exist $w \in N^{1,p}(X)$ and a sequence of convex combinations of $(v_n-g)_{n=1}^{\infty}$, denoted $(w_n)_{n=1}^{\infty}$, such that $w_n$ converges to $w$ in the $L^p(X)$ norm and $w_n$ converges to $w$ quasieverywhere.
This means that there exists a set $A \subset X$ such that $C_p(A)=0$ and $\lim_{n \to \infty} w_n(x) = w(x)$ for every $x \in X \setminus A$. As $w_n$ is a convex combination of the functions $v_n-g$, we get that $w_n$ is zero in $X \setminus \Omega$, and therefore for every $x \in (X \setminus \Omega) \setminus A$ we have $w(x) = \lim_{n \to \infty} w_n(x) =~0$.
In other words $w=0$ quasieverywhere in $X \setminus \Omega$.
On the other hand $w \in~N^{1,p}(\Omega)$, $u_0-g \in N^{1,p}(\Omega)$ and
\begin{align*}
\|w - (u_0-g) \|_{L^p(\Omega)}
&= \lim_{n \to \infty} \|w_n - u_0 + g \|_{L^p(\Omega)}
\\
&= \lim_{n \to \infty} \left\| \sum_{i = n}^{m_n} \lambda_{i,n} (v_i-u_0) \right\|_{L^p(\Omega)}
\\
&\leq \lim_{n \to \infty} \sum_{i = n}^{m_n} \lambda_{i,n} \| v_i-u_0 \|_{L^p(\Omega)}
= 0,
\end{align*}
because $v_i$ converges to $u_0$ in the $L^p(\Omega)$ norm.
Then $u_0-g=w$ $\mu$-a.e. in $\Omega$ and \cite[Proposition 1.59]{bjornbjorn} implies that $u_0 - g = w$ quasieverywhere in $\Omega$.
Finally by extending $u_0-g$ as zero in the complement of $\Omega$, we get that $u_0-g=w$ quasieverywhere in $X$. Therefore by \cite[Proposition 1.61]{bjornbjorn} we have that $u_0-g \in N^{1,p}(X)$ and therefore $u_0-g \in N_0^{1,p}(\Omega)$.

Now we show that $u_0$ reaches the infimum $I$.
Let $\| g_{u_n} \|_{L^p(\Omega)}$ converge to~$I_1$. Then for any $\eps > 0$ there exists $N$ so that $\| g_{u_n} \|_{L^p(\Omega)} \leq I_1 + \eps$ whenever $n \geq N$. Hence
\begin{equation*}
\| \rho_n \|_{L^p ( \Omega ) }
= \left\| \sum_{i=n}^{m_n} \lambda_{i,n} g_{u_i} \right\|_{L^p( \Omega ) }
\leq \sum_{i=n}^{m_n} \lambda_{i,n} \| g_{u_i} \|_{L^p( \Omega ) }
\leq I_1 + \eps.
\end{equation*}
This means that $\| \rho \|_{L^p ( \Omega )  } = \lim_{n \to \infty} \| \rho_n \|_{L^p ( \Omega )  } \leq I_1$.

Let $\int_{\Omega} f u_n d\mu$ converge to $I_2$.
Then for any $\eps > 0$ there exists $N$ so that $| \int_{\Omega} f u_n d\mu - I_2 | < \eps$ whenever $n \geq N$. Hence
\begin{equation*}
\left| \int_{\Omega} f v_n d\mu - I_2 \right|
= \left| \int_{\Omega} f \sum_{i=n}^{m_n} \lambda_{i,n} u_i d\mu - I_2 \right|
= \left| \sum_{i=n}^{m_n} \lambda_{i,n} \left( \int_{\Omega} f u_i d\mu - I_2 \right) \right|
< \eps
\end{equation*}
and therefore $\int_{\Omega} f v_n d\mu$ converges to $I_2$.
On the other hand $\int_{\Omega} f v_n d\mu$ also converges to $\int_{\Omega} f u_0 d\mu$, because by Hölder's inequality
\begin{equation*}
\left| \int_{\Omega} f v_n d\mu - \int_{\Omega} f u_0 d\mu \right|
\leq \int_{\Omega} |f| |v_n-u_0| d\mu
\leq \| f \|_{L^{p'} ( \Omega )  } \|v_n-u_0\|_{L^p 
( \Omega )  }
\to 0.
\end{equation*}

Thus $\int_{\Omega} f u_0 d\mu = I_2$ and finally
\begin{align*}
I
\leq J(u_0)
&= \int_{\Omega} g_{u_0}^p d\mu - p \int_{\Omega} f u_0 d\mu
\leq \int_{\Omega} \rho^p d\mu - p \int_{\Omega} f u_0 d\mu
\leq I_1^p - p I_2
\\
&= \lim_{n \to \infty} \int_{\Omega} g_{u_n}^p d\mu - p \int_{\Omega} f u_n d\mu
= I.
\end{align*}
This proves that $J$ reaches the infimum at $u_0$.
Therefore $u_0$ is a solution to the problem.
\end{proof}

\section{Stability of solutions}
\label{sect-stability}

Now that we have proved the existence of solutions to the Dirichlet problem, we can ask about their stability.
In this section we prove the stability with respect to the source term and with respect to the boundary data.
We assume that $X$ is geodesic and we fix a differentiable structure of Theorem~\ref{Existence of differentiable structure on mms throu asym gen linear}.
So in this section the gradients are interpreted as Cheeger's gradients.
This is particularly useful, because the Cheeger's gradient is linear and \eqref{mazyalemmacheeger} holds.
In this section solutions are defined as in Definition \ref{solutiondefinitioncheeger}.
To prove the stability, we first need to prove the following lemma.

\begin{lemma}
\label{Uniform bound for the p-energy of the solutions of the p-Laplace equation}
Let $u$ be a solution to the Dirichlet problem for the $p$-Poisson equation (in the Cheeger's setting) with source term $f \in L^{p'}(\Omega)$ and boundary data $g \in N^{1,p}(\Omega)$.
Then
\begin{equation*}
\|\nabla u \|_{L^p(\Omega)}  
\leq 1+ \left( \| \nabla g \|_{L^p(\Omega)} + c_1 \sqrt{N} \|f\|_{L^{p'}(\Omega)} \left( 1 + \| \nabla g \|_{L^p(\Omega)} \right) \right)^{\frac{1}{\min\{1,p-1\}}}.
\end{equation*}
That is, there exists a uniform bound, that is independent of $u$, for the $p$-energy of $u$.
The constants $c_1$ and $N$ are the same as in \eqref{mazyalemmacheeger}.
The right-hand side of the previous inequality will be denoted by $K(f,g)$.
\end{lemma}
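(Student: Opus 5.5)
We test the weak formulation \eqref{Variational formulation of the p-Laplace equation} with $\varphi = u-g \in N_0^{1,p}(\Omega)$. This gives
\begin{equation*}
\int_\Omega |\nabla u|_x^{p} d\mu(x) = \int_\Omega |\nabla u|_x^{p-2}\langle \nabla u, \nabla g\rangle_x d\mu(x) + \int_\Omega f (u-g) d\mu .
\end{equation*}
Write $a := \|\nabla u\|_{L^p(\Omega)}$ and $b := \|\nabla g\|_{L^p(\Omega)}$.

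For the first term on the right we use the Cauchy--Schwarz inequality for $\langle\cdot,\cdot\rangle_x$ and then Hölder with exponents $p'$ and $p$. This bounds it by $a^{p-1} b$. For the second term we use Hölder and then \eqref{mazyalemmacheeger} applied to $u-g \in N_0^{1,p}(\Omega)$. Linearity of the Cheeger gradient gives $\nabla(u-g) = \nabla u - \nabla g$, so the term is bounded by $c_1\sqrt N \|f\|_{L^{p'}(\Omega)} (a + b)$. Altogether,
\begin{equation*}
a^p \le a^{p-1} b + c_1\sqrt N \|f\|_{L^{p'}(\Omega)}(a+b).
\end{equation*}

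We may assume $a > 1$, since otherwise the claim is trivial because the right-hand side of the lemma is at least $1$. Dividing by $a^{p-1}$ gives
\begin{equation*}
a \le b + c_1\sqrt N \|f\|_{L^{p'}} \bigl(a^{2-p} + b a^{1-p}\bigr).
\end{equation*}
Set $M := b + c_1\sqrt N\|f\|_{L^{p'}}(1+b)$; this is exactly the quantity inside the parentheses in the lemma. We split according to $p$.

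If $p \ge 2$, then $a > 1$ gives $a^{2-p} \le 1$ and $a^{1-p} \le 1$. Hence $a \le M$, and the exponent $1/\min\{1,p-1\} = 1$ matches the claim.

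If $1<p<2$, we divide instead by $a$ (using $a>1$). This gives $a^{p-1} \le b a^{p-2} + c_1\sqrt N\|f\|(1 + b a^{-1})$. Since $a^{p-2} \le 1$ and $a^{-1}\le 1$, we get $a^{p-1} \le M$. Therefore $a \le M^{1/(p-1)}$, and here $\min\{1,p-1\} = p-1$.

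In both cases $a \le \max\{1, M^{1/\min\{1,p-1\}}\} \le 1 + M^{1/\min\{1,p-1\}} = K(f,g)$. We expect no real obstacle. The only care needed is in choosing, for each regime of $p$, which power of $a$ to divide by so that the powers of $a$ remaining on the right are nonpositive. We also need to check that \eqref{mazyalemmacheeger} applies to the zero extension of $u-g$, which holds because $u - g \in N_0^{1,p}(\Omega)$.
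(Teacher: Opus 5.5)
Your proposal is correct and follows essentially the same argument as the paper. You test with $\varphi = u-g$ and use H\"older together with \eqref{mazyalemmacheeger} to reach $a^p \le a^{p-1}b + c_1\sqrt{N}\|f\|_{L^{p'}(\Omega)}(a+b)$, then divide by $a^{p-1}$ when $p\ge 2$ and by $a$ when $1<p<2$, using $a>1$ to bound the remaining nonpositive powers of $a$ by $1$.
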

\begin{proof}
Clearly the result holds, if $\|\nabla u\|_{L^p(\Omega)} \leq 1$.
Thus let $u$ be a solution of the Dirichlet problem with $ \| \nabla u \|_{L^p( \Omega )} > 1 $.

Notice that $u-g \in N_0^{1,p}(\Omega)$. Thus we can take $\varphi = u - g $ in \eqref{Variational formulation of the p-Laplace equation}, so we have
\begin{equation*}
\| \nabla u \|_{L^p ( \Omega ) }^p
=
\int_{\Omega}|\nabla u|_x^{p-2 } \langle \nabla u , \nabla g \rangle_x + f(u-g) d \mu(x).
\end{equation*}
By applying Hölder's inequality to the above and then \eqref{mazyalemmacheeger}, we obtain
\begin{equation}
\label{estimation of p-energy}
\begin{split}
\| \nabla u \|_{L^p ( \Omega ) }^p
&\leq 
\| \nabla u \|_{L^p ( \Omega ) }^{p-1}
\| \nabla g \|_{L^p ( \Omega ) }
+
\| f \|_{L^{p'} ( \Omega ) }
\| u - g \|_{L^p ( \Omega ) }
\\
&\leq 
\| \nabla u \|_{L^p ( \Omega ) }
^{p-1}
\| \nabla g \|_{L^p ( \Omega ) }
+
c_1 \sqrt{N}
\| f \|_{L^{p'} ( \Omega ) } \| \nabla (u-g) \|_{L^p(\Omega)}
\\
\leq \| \nabla u \|_{L^p(\Omega)}^{p-1} &\| \nabla g \|_{L^p(\Omega)} + c_1 \sqrt{N} \|f\|_{L^{p'}(\Omega)} \left( \| \nabla u \|_{L^p(\Omega)} + \| \nabla g \|_{L^p(\Omega)} \right).
\end{split}
\end{equation}

If $p \geq 2$, we multiply both sides of \eqref{estimation of p-energy} with $\| \nabla u \|_{L^p( \Omega )}^{1-p}$ to obtain
\begin{align*}
\| \nabla u \|_{L^p(\Omega)}
&\leq \| \nabla g \|_{L^p(\Omega)}
\\
&+ c_1 \sqrt{N} \| f \|_{L^{p'}(\Omega)} \left( \| \nabla u \|_{L^p(\Omega)}^{2-p} + \| \nabla g \|_{L^p(\Omega)} \| \nabla u \|_{L^p(\Omega)}^{1-p} \right)
\\
&\leq \| \nabla g \|_{L^p(\Omega)} + c_1 \sqrt{N} \| f \|_{L^{p'}(\Omega)} \left(1 + \| \nabla g \|_{L^p(\Omega)} \right).
\end{align*}

On the other hand if $1 < p < 2$, then by dividing both sides of \eqref{estimation of p-energy} with $\| \nabla u \|_{L^p ( \Omega ) }$, we get
\begin{align*}
\| \nabla u \|_{L^p ( \Omega ) }^{p-1}
&\leq 
\| \nabla u \|_{L^p ( \Omega ) }
^{p-2}
\| \nabla g \|_{L^p ( \Omega ) }
+
c_1 \sqrt{N}
\| f \|_{L^{p'} ( \Omega ) }
\left( 1 + \frac{\| \nabla g \|_{L^p(\Omega)}}{\| \nabla u \|_{L^p(\Omega)}} \right)
\\
&\leq 
\| \nabla g \|_{L^p ( \Omega ) }
+
c_1 \sqrt{N}
\| f \|_{L^{p'} ( \Omega ) }
\left( 1 + \| \nabla g \|_{L^p(\Omega)} \right).
\end{align*}
This completes the proof.
\end{proof}

\begin{theorem}
The Dirichlet problem for the $p$-Poisson equation is stable with respect to perturbations of the source term, that is to say if $u$ is a solution to the Dirichlet problem for the $p$-Poisson equation (in the Cheeger's setting) with source term $f \in L^{p'}(\Omega)$ and boundary data $g \in N^{1,p}(\Omega)$, and $v$ is a solution with source term $h \in L^{p'}(\Omega)$ and boundary data $g$, then
\begin{equation*}
\| u-v \|_{N^{1,p}(\Omega)}
\leq C \|f-h\|_{L^{p'}(\Omega)}^{\frac{1}{p-1}},
\end{equation*}
if $p \geq 2$, and
\begin{equation*}
\|u-v\|_{N^{1,p}(\Omega)}
\leq C \max \left\{ \|f-h\|_{L^{p'}(\Omega)} K(f,g)^{2-p} , \|f-h\|_{L^{p'}(\Omega)}^{\frac{1}{p-1}} \right\},
\end{equation*}
if $1 < p < 2$. Here $K(f,g)$ is the same as in Lemma \ref{Uniform bound for the p-energy of the solutions of the p-Laplace equation} and $C$ is a constant that depends only on $c_1$, $N$ and $p$, where $c_1$ and $N$ are the constants in \eqref{mazyalemmacheeger}.
\end{theorem}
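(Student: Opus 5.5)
Set $w := u - v$. Since $u - g$ and $v - g$ both lie in $N_0^{1,p}(\Omega)$, we have $w \in N_0^{1,p}(\Omega)$, so $w$ is admissible as a test function $\varphi$ in \eqref{Variational formulation of the p-Laplace equation} for both $u$ (with $f$) and $v$ (with $h$). Subtracting the two identities and using linearity of the Cheeger gradient gives
\begin{equation*}
\int_\Omega \left\langle |\nabla u|_x^{p-2}\nabla u - |\nabla v|_x^{p-2}\nabla v , \nabla u - \nabla v\right\rangle_x d\mu(x) = \int_\Omega (f-h) w \, d\mu .
\end{equation*}
By H\"older's inequality and \eqref{mazyalemmacheeger}, the right-hand side is at most $c_1\sqrt{N}\|f-h\|_{L^{p'}(\Omega)}\|\nabla w\|_{L^p(\Omega)}$. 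For the left-hand side we apply Lemma \ref{notclarkson} pointwise in the inner product space $(\mathbb{R}^N,\langle\cdot,\cdot\rangle_x)$.

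\emph{Case $p\ge 2$.} Lemma \ref{notclarkson} bounds the left-hand side below by $2^{1-p}\|\nabla w\|_{L^p(\Omega)}^p$. Dividing by $\|\nabla w\|_{L^p(\Omega)}$ (the case $\nabla w = 0$ is trivial) gives $\|\nabla w\|_{L^p(\Omega)}^{p-1}\le 2^{p-1}c_1\sqrt N\|f-h\|_{L^{p'}(\Omega)}$, which is the desired power $\frac{1}{p-1}$.

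\emph{Case $1<p<2$.} This is the main step, since Lemma \ref{notclarkson} only yields
\begin{equation*}
\int_\Omega (|\nabla u|_x+|\nabla v|_x)^{p-2}|\nabla w|_x^2\,d\mu \le c_1\sqrt N\|f-h\|_{L^{p'}(\Omega)}\|\nabla w\|_{L^p(\Omega)}.
\end{equation*}
To recover an $L^p$ bound, write $|\nabla w|^p = \big(|\nabla w|^2(|\nabla u|+|\nabla v|)^{p-2}\big)^{p/2}(|\nabla u|+|\nabla v|)^{(2-p)p/2}$ and apply H\"older with exponents $2/p$ and $2/(2-p)$:
\begin{equation*}
\|\nabla w\|_{L^p}^p \le \left(\int (|\nabla u|+|\nabla v|)^{p-2}|\nabla w|^2\right)^{p/2}\|\,|\nabla u|+|\nabla v|\,\|_{L^p}^{(2-p)p/2}.
\end{equation*}
Lemma \ref{Uniform bound for the p-energy of the solutions of the p-Laplace equation} bounds $\|\nabla u\|_{L^p}\le K(f,g)$ and $\|\nabla v\|_{L^p}\le K(h,g)$. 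Combining, we get $\|\nabla w\|_{L^p}^{p}\le C\big(\|f-h\|\,\|\nabla w\|\big)^{p/2}(K(f,g)+K(h,g))^{(2-p)p/2}$, hence $\|\nabla w\|_{L^p}\le C\|f-h\|_{L^{p'}}(K(f,g)+K(h,g))^{2-p}$.

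The obstacle is that the statement involves only $K(f,g)$, not $K(h,g)$. To remove $K(h,g)$, I would bound $\|\nabla v\|\le\|\nabla u\|+\|\nabla w\|$ and split into cases. If $\|\nabla w\|\le K(f,g)$, the sum is at most $3K(f,g)$ and we obtain the first term of the maximum. Otherwise $\|\nabla w\|$ dominates, and the inequality becomes $\|\nabla w\|\le C\|f-h\|\,\|\nabla w\|^{2-p}$, i.e. $\|\nabla w\|^{p-1}\le C\|f-h\|$, giving the second term $\|f-h\|^{\frac1{p-1}}$.

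In both cases we finish by passing to the $N^{1,p}$ norm. By \eqref{mazyalemmacheeger}, $\|w\|_{L^p}\le c_1\sqrt N\|\nabla w\|_{L^p}$, and by Theorem \ref{Differentiable structure on mms throu asym gen linear properties}, $g_w\le\sqrt N|\nabla w|_x$. Hence $\|w\|_{N^{1,p}(\Omega)}\le C(c_1,N,p)\|\nabla w\|_{L^p(\Omega)}$, and the constant depends only on $c_1$, $N$ and $p$, as required.
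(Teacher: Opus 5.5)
Your proposal is correct and follows essentially the same route as the paper. You test both equations with $u-v$ and apply Lemma \ref{notclarkson} directly for $p\ge 2$. For $1<p<2$ you use H\"older with exponents $2/p$ and $2/(2-p)$, then $\|\nabla v\|\le\|\nabla u\|+\|\nabla(u-v)\|$ and a two-case split; the paper compares $\|\nabla(u-v)\|$ with $\|\nabla u\|$ rather than with $K(f,g)$, which is immaterial. The only slip is that your intermediate display for $1<p<2$ drops the factor $\frac{1}{p-1}$ coming from Lemma \ref{notclarkson}, which is harmless since it is absorbed into $C$.
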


\begin{proof}
Considering \eqref{Variational formulation of the p-Laplace equation} for both solutions $u$ and $v$ with the same $\varphi$ and subtracting them, we have
\begin{equation*}
\int_{\Omega} \langle |\nabla u|_x^{p-2} \nabla u - |\nabla v|_x^{p-2} \nabla v , \nabla \varphi \rangle_x d\mu(x)
= \int_{\Omega} (f-h) \varphi d\mu 
\quad
\forall \quad \varphi \in N_0^{1,p}(\Omega).
\end{equation*}
Since $u$ and $v$ have the same boundary data, we have that $u-g \in N_0^{1,p}(\Omega)$ and $v-g \in N_0^{1,p}(\Omega)$ and therefore $\varphi_0 := u-v = (u-g) - (v-g) \in N_0^{1,p}(\Omega)$. Evaluating the previous equation with $\varphi_0$, we obtain
\begin{equation}
\label{differencewithuandv}
\int_{\Omega} \langle |\nabla u|_x^{p-2} \nabla u - |\nabla v|_x^{p-2} \nabla v , \nabla u - \nabla v \rangle_x d\mu(x)
= \int_{\Omega} (f-h) (u-v) d\mu. 
\end{equation}
From Hölder's inequality and \eqref{mazyalemmacheeger} we have
\begin{equation}
\label{Estimation between the norm of the solutions by the energy of the difference-1}
\begin{split}
\int_{\Omega} (f-h) (u-v) d\mu
&\leq
\| f-h \|_{L^{p'}(\Omega)} 
\| u -  v \|_{L^p(\Omega)}
\\
&\leq c_1 \sqrt{N} \| f-h \|_{L^{p'}(\Omega)} \|\nabla u -\nabla v \|_{L^p(\Omega)}.
\end{split}
\end{equation}
From Theorem \ref{Differentiable structure on mms throu asym gen linear properties} and \eqref{mazyalemmacheeger} we get that
\begin{equation}
\label{newtoniannorm}
\begin{split}
\| u-v \|_{N^{1,p}(\Omega)}
&= \left( \| u-v \|_{L^p(\Omega)}^p + \| g_{u-v} \|_{L^p(\Omega)}^p \right)^{1/p}
\\
&\leq \left( c_1^p \sqrt{N}^p \| \nabla (u-v) \|_{L^p(\Omega)}^p + \sqrt{N}^p \| \nabla (u-v) \|_{L^p(\Omega)}^p \right)^{1/p}
\\
&= \left( c_1^p+1 \right)^{1/p} \sqrt{N} \| \nabla u - \nabla v \|_{L^p(\Omega)}.
\end{split}
\end{equation}

Now assume that $p \geq 2$.
From \eqref{differencewithuandv} and \eqref{Estimation between the norm of the solutions by the energy of the difference-1} together with Lemma \ref{notclarkson} it follows that
\begin{align*}
\int_\Omega
|\nabla u - \nabla v|_x^p d \mu(x)
&\leq \int_{\Omega} 2^{p-1} \langle |\nabla u|_x^{p-2} \nabla u - |\nabla v|_x^{p-2} \nabla v , \nabla u - \nabla v \rangle_x d\mu(x)
\\
&\leq c_1 \sqrt{N} 2^{p-1} \|f-h\|_{L^{p'}(\Omega)} \| \nabla u - \nabla v \|_{L^p(\Omega)}
\\
\implies 
\| \nabla u - &\nabla v \|_{L^p(\Omega)}^{p-1}
\leq c_1 \sqrt{N} 2^{p-1} \|f-h\|_{L^{p'}(\Omega)}.
\end{align*}
From the above estimation and \eqref{newtoniannorm} we get that
\begin{align*}
\| u-v \|_{N^{1,p}(\Omega)}
&\leq \left( c_1^p+1 \right)^{1/p} \sqrt{N} \| \nabla u - \nabla v \|_{L^p(\Omega)}
\\
&\leq 2 \left( c_1^p+1 \right)^{1/p} \sqrt{N}^{\frac{p}{p-1}} \left( c_1 \|f-h\|_{L^{p'}(\Omega)} \right)^{\frac{1}{p-1}}.
\end{align*}
This proves the result for the case $p \geq 2$.

Next we assume that $1<p<2$.
Applying Hölder's inequality with exponents $2/p$ and $2/ ( 2 - p )$, we obtain
\begin{equation}
\label{Estimation for the p-energy of the p-Laplace equation in the case 1<p<2}
\begin{split}
&\int_\Omega |\nabla(u-v)|_x^p d \mu(x)
\\
= &\int_\Omega |\nabla(u-v)|_x^p (|\nabla u|_x+|\nabla v|_x)^{p(p-2)/2} \cdot (|\nabla u|_x+|\nabla v|_x)^{p(2-p) / 2} d \mu(x)
\\
\leq &\left( \int_\Omega |\nabla(u-v)|_x^2(|\nabla u|_x+|\nabla v|_x)^{p-2} d \mu(x) \right)^{p/2} \left\| |\nabla u|_x + |\nabla v|_x \right\|_{L^p(\Omega)}^{p(2-p)/2}
\\
\leq &\left( \int_\Omega \frac{1}{p-1} \langle |\nabla u|_x^{p-2} \nabla u-|\nabla v|_x^{p-2} \nabla v , \nabla u-\nabla v \rangle_x d \mu(x) \right)^{p/2}
\\
&\cdot \left( \| \nabla u \|_{L^p(\Omega)} + \| \nabla v \|_{L^p(\Omega)} \right)^{p(2-p)/2}.
\end{split}
\end{equation}
In the last inequality we used Lemma \ref{notclarkson}.
Using \eqref{differencewithuandv} and \eqref{Estimation between the norm of the solutions by the energy of the difference-1} it follows that
\begin{align*}
\int_\Omega
|\nabla(u-v)|_x^p d \mu(x)
&\leq \left( \frac{1}{p-1} c_1 \sqrt{N} \| f-h \|_{L^{p'}(\Omega)} \|\nabla u -\nabla v \|_{L^p(\Omega)} \right)^{p/2}
\\
&\cdot \left( \| \nabla u \|_{L^p(\Omega)} + \| \nabla v \|_{L^p(\Omega)} \right)^{p(2-p)/2}
\end{align*}
and therefore
\begin{equation}
\label{technicaldetail}
\begin{split}
\left\| \nabla u - \nabla v \right\|_{L^p(\Omega)}
&\leq \frac{c_1 \sqrt{N}}{p-1} \| f - h \|_{L^{p'}(\Omega)} \left( \| \nabla u \|_{L^p(\Omega)} + \| \nabla v \|_{L^p(\Omega)} \right)^{2-p}
\\
\leq \frac{c_1 \sqrt{N}}{p-1} &\|f-h\|_{L^{p'}(\Omega)} \left( 2\left\| \nabla u \right\|_{L^p(\Omega)} + \left\| \nabla v - \nabla u \right\|_{L^p(\Omega)} \right)^{2-p}.
\end{split}
\end{equation}
If $\| \nabla v - \nabla u \|_{L^p(\Omega)} \leq \| \nabla u \|_{L^p(\Omega)}$, then we have
\begin{equation*}
\left\| \nabla u - \nabla v \right\|_{L^p(\Omega)}
\leq \frac{c_1 \sqrt{N}}{p-1} \|f-h\|_{L^{p'}(\Omega)} \left( 3\left\| \nabla u \right\|_{L^p(\Omega)} \right)^{2-p}.
\nonumber
\end{equation*}
By using Lemma \ref{Uniform bound for the p-energy of the solutions of the p-Laplace equation} we obtain
\begin{equation*}
\left\| \nabla u - \nabla v \right\|_{L^p(\Omega)}
\leq \frac{3^{2-p} c_1 \sqrt{N}}{p-1} \|f-h\|_{L^{p'}(\Omega)} K(f,g)^{2-p}.
\end{equation*}
On the other hand if $\| \nabla v - \nabla u \|_{L^p(\Omega)} > \| \nabla u \|_{L^p(\Omega)}$, we can multiply both sides of \eqref{technicaldetail} with $\| \nabla v - \nabla u \|_{L^p(\Omega)}^{p-2}$ to obtain
\begin{align*}
\left\| \nabla u - \nabla v \right\|_{L^p(\Omega)}^{p-1}
&\leq \frac{c_1 \sqrt{N}}{p-1} \|f-h\|_{L^{p'}(\Omega)} \left( \frac{2\left\| \nabla u \right\|_{L^p(\Omega)}}{\left\| \nabla v - \nabla u \right\|_{L^p(\Omega)}} +1 \right)^{2-p}
\\
&\leq \frac{3^{2-p} c_1 \sqrt{N}}{p-1} \|f-h\|_{L^{p'}(\Omega)}.
\end{align*}
In conclusion by using \eqref{newtoniannorm}
\begin{align*}
\|u-v\|_{N^{1,p}(\Omega)}
&\leq (c_1^p+1)^{1/p} \sqrt{N} \| \nabla u - \nabla v\|_{L^p(\Omega)}
\\
&\leq C \max \left\{ \|f-h\|_{L^{p'}(\Omega)} K(f,g)^{2-p} , \|f-h\|_{L^{p'}(\Omega)}^{\frac{1}{p-1}} \right\}.
\end{align*}
This proves the result for the case $1 < p < 2$.
\end{proof}

\begin{theorem}
The Dirichlet problem for the $p$-Poisson equation is stable with respect to perturbations of the boundary data, that is to say if $u$ is a solution to the Dirichlet problem for the $p$-Poisson equation (in the Cheeger's setting) with source term $f \in L^{p'}(\Omega)$ and boundary data $g \in N^{1,p}(\Omega)$, and $v$ is a solution with source term $f$ and boundary data $\hat{g} \in N^{1,p}(\Omega)$, then
\begin{equation*}
\|u-v\|_{N^{1,p}(\Omega)}
\leq C \max \left\{ \| g - \hat{g} \|_{N^{1,p}(\Omega)} , \left\| g - \hat{g} \right\|_{N^{1,p}(\Omega)}^{\frac{1}{\max\{p,2\}}} K(f,g)^{1-\frac{1}{\max\{p,2\}}} \right\}.
\end{equation*}
Here $K(f,g)$ is the same as in Lemma \ref{Uniform bound for the p-energy of the solutions of the p-Laplace equation} and the constant $C$ depends only on $c_1$, $N$ and $p$, where $c_1$ and $N$ are the constants from \eqref{mazyalemmacheeger}.
\end{theorem}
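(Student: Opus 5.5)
The plan is to mimic the proof of stability with respect to the source term, using a test function that corrects for the differing boundary data. Since $u-g$ and $v-\hat g$ both lie in $N_0^{1,p}(\Omega)$, the function
\begin{equation*}
\varphi_0 := (u-g)-(v-\hat g) = (u-v) - (g-\hat g)
\end{equation*}
lies in $N_0^{1,p}(\Omega)$. Write $A(\xi) := |\xi|_x^{p-2}\xi$. Subtracting the identities \eqref{Variational formulation of the p-Laplace equation} for $u$ and $v$ (same $f$) and testing with $\varphi_0$, the source terms cancel and we get
\begin{equation*}
\int_\Omega \langle A(\nabla u)-A(\nabla v), \nabla u-\nabla v\rangle_x \, d\mu(x) = \int_\Omega \langle A(\nabla u)-A(\nabla v), \nabla (g-\hat g)\rangle_x \, d\mu(x).
\end{equation*}
Set $D := \|\nabla u-\nabla v\|_{L^p(\Omega)}$ and $\delta := \|\nabla(g-\hat g)\|_{L^p(\Omega)}$. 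By Theorem \ref{Differentiable structure on mms throu asym gen linear properties}, $|\nabla w|_x\le g_w$, so $\delta\le \|g-\hat g\|_{N^{1,p}(\Omega)}$. Cauchy--Schwarz and Hölder bound the right-hand side by $(\|\nabla u\|_{L^p}^{p-1}+\|\nabla v\|_{L^p}^{p-1})\delta$. Using $\|\nabla v\|_{L^p}\le \|\nabla u\|_{L^p}+D$, this is at most $C(\|\nabla u\|_{L^p}+D)^{p-1}\delta$. Lemma \ref{Uniform bound for the p-energy of the solutions of the p-Laplace equation} gives $\|\nabla u\|_{L^p}\le K(f,g)$. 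This avoids needing any bound on $K(f,\hat g)$.

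Then I split into cases according to whether $D\le\|\nabla u\|_{L^p}$ or $D>\|\nabla u\|_{L^p}$, as in the previous proof.

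\emph{Case $p\ge 2$.} Lemma \ref{notclarkson} gives $D^p\le 2^{p-1}C(\|\nabla u\|_{L^p}+D)^{p-1}\delta$.
If $D\le\|\nabla u\|_{L^p}$, then $D^p\le CK(f,g)^{p-1}\delta$, so $D\le C\delta^{1/p}K(f,g)^{1-1/p}$.
If $D>\|\nabla u\|_{L^p}$, then $D^p\le C D^{p-1}\delta$, so $D\le C\delta$.

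\emph{Case $1<p<2$.} I repeat the Hölder argument of \eqref{Estimation for the p-energy of the p-Laplace equation in the case 1<p<2}:
\begin{equation*}
D^p\le \Big(\tfrac{1}{p-1}\, C(\|\nabla u\|_{L^p}+D)^{p-1}\delta\Big)^{p/2}\,(2\|\nabla u\|_{L^p}+D)^{p(2-p)/2}.
\end{equation*}
If $D\le\|\nabla u\|_{L^p}$, this gives $D^p\le C(K^{p-1}\delta)^{p/2}K^{p(2-p)/2}$. Hence $D^2\le CK(f,g)\delta$, i.e. $D\le C\delta^{1/2}K(f,g)^{1/2}$.
If $D>\|\nabla u\|_{L^p}$, the right-hand side is at most $C\delta^{p/2}D^{p/2}$, so $D\le C\delta$.

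In both ranges of $p$ the exponent $1/\max\{p,2\}$ appears exactly as in the statement.

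Finally, to pass to the Newtonian norm I cannot apply \eqref{newtoniannorm} to $u-v$ directly, since $u-v\notin N_0^{1,p}(\Omega)$. Instead I apply it to $\varphi_0$:
\begin{equation*}
\|u-v\|_{N^{1,p}(\Omega)}\le \|\varphi_0\|_{N^{1,p}(\Omega)}+\|g-\hat g\|_{N^{1,p}(\Omega)}\le (c_1^p+1)^{1/p}\sqrt N\,(D+\delta)+\|g-\hat g\|_{N^{1,p}(\Omega)}.
\end{equation*}
Combining this with the case bounds on $D$ and $\delta\le\|g-\hat g\|_{N^{1,p}(\Omega)}$ gives the maximum in the statement.

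The main obstacle is controlling the term $\|\nabla v\|_{L^p}^{p-1}$ in terms of data attached to $g$ only. Bounding it through $\|\nabla u\|_{L^p}+D$ and then splitting into the two cases is what resolves it. The rest is bookkeeping of constants, which depend only on $c_1$, $N$ and $p$.
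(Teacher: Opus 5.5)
Your proposal is correct and follows the paper's proof essentially step by step. It uses the same test function $(u-v)-(g-\hat g)\in N_0^{1,p}(\Omega)$, the same Hölder bound of the monotonicity integral by $\|\nabla(g-\hat g)\|_{L^p(\Omega)}\bigl(\|\nabla u\|_{L^p(\Omega)}^{p-1}+\|\nabla v\|_{L^p(\Omega)}^{p-1}\bigr)$, the same control of $\|\nabla v\|_{L^p(\Omega)}$ by $\|\nabla u\|_{L^p(\Omega)}+D$ with the case split $D\le\|\nabla u\|_{L^p(\Omega)}$ versus $D>\|\nabla u\|_{L^p(\Omega)}$ and Lemma \ref{Uniform bound for the p-energy of the solutions of the p-Laplace equation}, and the same Hölder trick for $1<p<2$. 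The only cosmetic difference is the passage to the Newtonian norm: you use the triangle inequality in $N^{1,p}(\Omega)$ and apply \eqref{newtoniannorm} to $\varphi_0$, while the paper in \eqref{newtoniannorm2} splits only the $L^p$ part and bounds $g_{u-v}\le\sqrt N|\nabla(u-v)|_x$ directly; both give constants depending only on $c_1$, $N$ and $p$.
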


\begin{proof}
Note that $\varphi_1 := u-v+\hat{g}-g \in N_0^{1,p}(\Omega)$, because $u-g \in N_0^{1,p}(\Omega)$ and $v-\hat{g} \in N_0^{1,p}(\Omega)$.
This means that we can apply \eqref{mazyalemmacheeger} to the function~$\varphi_1$.
Thus we get
\begin{equation}
\label{newtoniannorm2}
\begin{split}
&\| u-v \|_{N^{1,p}(\Omega)}
= \left( \|u-v\|_{L^p(\Omega)}^p + \|g_{u-v}\|_{L^p(\Omega)}^p \right)^{1/p}
\\
&\leq \|u-v\|_{L^p(\Omega)} + \|g_{u-v}\|_{L^p(\Omega)}
\\
&\leq \| u-v+\hat{g}-g \|_{L^p(\Omega)} + \| g-\hat{g} \|_{L^p(\Omega)} + \sqrt{N} \|\nabla(u-v)\|_{L^p(\Omega)}
\\
&\leq c_1 \sqrt{N} \| \nabla (u-v+\hat{g}-g) \|_{L^p(\Omega)} + \| g-\hat{g} \|_{L^p(\Omega)} + \sqrt{N} \|\nabla(u-v)\|_{L^p(\Omega)}
\\
&\leq (c_1+1) \sqrt{N} \| \nabla (u-v) \|_{L^p(\Omega)} + c_1 \sqrt{N} \| \nabla (g - \hat{g}) \|_{L^p(\Omega)} + \| g-\hat{g} \|_{L^p(\Omega)}
\\
&\leq (c_1+1) \sqrt{N} \| \nabla u - \nabla v \|_{L^p(\Omega)} + (c_1 \sqrt{N} +1) \| g - \hat{g} \|_{N^{1,p}(\Omega)},
\end{split}
\end{equation}
where we also used Theorem \ref{Differentiable structure on mms throu asym gen linear properties}.
This means that we have the stability, if we can bound $\| \nabla u - \nabla v \|_{L^p(\Omega)}$ with $\| g-\hat{g} \|_{N^{1,p}(\Omega)}$.
By substituting $\varphi_1$ in \eqref{Variational formulation of the p-Laplace equation} for both $u$ and $v$, we obtain
\begin{equation*}
\int_{\Omega} |\nabla u|_x^{p-2} \langle \nabla u , \nabla \varphi_1 \rangle_x d \mu(x)
= \int_{\Omega} f \varphi_1 d \mu
= \int_{\Omega} |\nabla v|_x^{p-2} \langle \nabla v , \nabla \varphi_1 \rangle_x d \mu(x).
\end{equation*}
From Hölder's inequality, we have
\begin{equation}
\label{Stability of the p-Poisson problem when the boundary information vary-2}
\begin{split}
&\int_\Omega \langle |\nabla u|_x^{p-2} \nabla u - |\nabla v|_x^{p-2} \nabla v , \nabla u - \nabla v \rangle_x d \mu(x)
\\
= &\int_{\Omega} \langle |\nabla u|_x^{p-2} \nabla u - |\nabla v|_x^{p-2} \nabla v , \nabla g - \nabla \hat{g} \rangle_x d \mu(x)
\\
\leq &\int_{\Omega} \left| |\nabla u|_x^{p-2} \nabla u - |\nabla v|_x^{p-2} \nabla v \right|_x |\nabla g - \nabla \hat{g}|_x d \mu(x)
\\
\leq &\left\| |\nabla u|_x^{p-2} \nabla u - |\nabla v|_x^{p-2} \nabla v \right\|_{L^{p'}(\Omega)} \left\| \nabla g - \nabla \hat{g} \right\|_{L^p(\Omega)}
\\
\leq &\| \nabla g - \nabla \hat{g} \|_{L^p(\Omega)} \left( \left\| |\nabla u|_x^{p-2} \nabla u \right\|_{L^{p'}(\Omega)} + \left\| |\nabla v|_x^{p-2} \nabla v \right\|_{L^{p'}(\Omega)} \right)
\\
= &\left\| \nabla g - \nabla \hat{g} \right\|_{L^p(\Omega)} \left( \left\| \nabla u \right\|_{L^{p}(\Omega)}^{p-1} + \left\| \nabla v \right\|_{L^{p}(\Omega)}^{p-1} \right).
\end{split}
\end{equation}

Assume now that $p \geq 2$.
From Lemma \ref{notclarkson} and \eqref{Stability of the p-Poisson problem when the boundary information vary-2} we have
\begin{equation}
\label{technicaldetailtwo}
\begin{split}
\int_\Omega &|\nabla u - \nabla v|_x^p d \mu(x)
\leq \int_\Omega 2^{p-1} \langle |\nabla u|_x^{p-2} \nabla u - |\nabla v|_x^{p-2} \nabla v , \nabla u - \nabla v \rangle_x d \mu(x)
\\
&\leq 2^{p-1} \left\| \nabla g - \nabla \hat{g} \right\|_{L^p(\Omega)} \left( \left\| \nabla u \right\|_{L^{p}(\Omega)}^{p-1} + \left\| \nabla v \right\|_{L^{p}(\Omega)}^{p-1} \right)
\\
&\leq 2^{p-1} \left\| \nabla g - \nabla \hat{g} \right\|_{L^p(\Omega)} \left( \left\| \nabla u \right\|_{L^p(\Omega)} + \left\| \nabla v \right\|_{L^p(\Omega)} \right)^{p-1}
\\
&\leq 2^{p-1} \left\| \nabla g - \nabla \hat{g} \right\|_{L^p(\Omega)} \left( 2\left\| \nabla u \right\|_{L^p(\Omega)} + \left\| \nabla u - \nabla v \right\|_{L^p(\Omega)} \right)^{p-1}.
\end{split}
\end{equation}
If $\| \nabla v - \nabla u \|_{L^p(\Omega)} \leq \| \nabla u \|_{L^p(\Omega)}$, then we have
\begin{equation*}
\left\| \nabla u - \nabla v \right\|_{L^p(\Omega)}^p
\leq 6^{p-1} \left\| \nabla g - \nabla \hat{g} \right\|_{L^p(\Omega)} \left\| \nabla u \right\|_{L^p(\Omega)}^{p-1}.
\end{equation*}
By using Lemma \ref{Uniform bound for the p-energy of the solutions of the p-Laplace equation} we obtain
\begin{equation*}
\left\| \nabla u - \nabla v \right\|_{L^p(\Omega)}
\leq 6 \left\| \nabla g - \nabla \hat{g} \right\|_{L^p(\Omega)}^{\frac{1}{p}} K(f,g)^{1-\frac{1}{p}}.
\end{equation*}
On the other hand if $\| \nabla v - \nabla u \|_{L^p(\Omega)} > \| \nabla u \|_{L^p(\Omega)}$, we can multiply both sides of \eqref{technicaldetailtwo} with $\| \nabla v - \nabla u \|_{L^p(\Omega)}^{1-p}$ to obtain
\begin{align*}
\left\| \nabla u - \nabla v \right\|_{L^p(\Omega)}
&\leq 2^{p-1} \left\| \nabla g - \nabla \hat{g} \right\|_{L^p(\Omega)} \left( \frac{2 \| \nabla u \|_{L^p(\Omega)}}{\| \nabla v - \nabla u \|_{L^p(\Omega)}} +1 \right)^{p-1}
\\
&\leq 6^{p-1} \left\| \nabla g - \nabla \hat{g} \right\|_{L^p(\Omega)}.
\end{align*}
In conclusion by using Theorem \ref{Differentiable structure on mms throu asym gen linear properties} and \eqref{newtoniannorm2} we get
\begin{align*}
\|u-v\|_{N^{1,p}(\Omega)}
&\leq (c_1+1) \sqrt{N} \| \nabla u - \nabla v \|_{L^p(\Omega)} + (c_1 \sqrt{N} +1) \| g - \hat{g} \|_{N^{1,p}(\Omega)}
\\
&\leq C \max \left\{ \left\| \nabla g - \nabla \hat{g} \right\|_{L^p(\Omega)}^{\frac{1}{p}} K(f,g)^{1-\frac{1}{p}} , \left\| \nabla g - \nabla \hat{g} \right\|_{L^p(\Omega)} \right\}
\\
&+ C \| g - \hat{g} \|_{N^{1,p}(\Omega)}
\\
&\leq C \max \left\{ \| g - \hat{g} \|_{N^{1,p}(\Omega)} , \left\| g - \hat{g} \right\|_{N^{1,p}(\Omega)}^{\frac{1}{p}} K(f,g)^{1-\frac{1}{p}} \right\},
\end{align*}
where the constant $C$ depends only on $c_1$, $N$ and $p$.
This proves the result for the case $p \geq 2$.

Now assume that $1<p<2$.
Notice that \eqref{Estimation for the p-energy of the p-Laplace equation in the case 1<p<2} holds.
Thus by using \eqref{Stability of the p-Poisson problem when the boundary information vary-2}, we obtain
\begin{equation}
\label{technicaldetailthree}
\begin{split}
&\int_\Omega |\nabla(u-v)|_x^p d \mu(x)
\\
&\leq \left( \int_\Omega \frac{1}{p-1} \langle |\nabla u|_x^{p-2} \nabla u-|\nabla v|_x^{p-2} \nabla v , \nabla u-\nabla v \rangle_x d \mu(x) \right)^{p/2}
\\
&\cdot \left( \| \nabla u \|_{L^p(\Omega)} + \| \nabla v \|_{L^p(\Omega)} \right)^{p(2-p)/2}
\\
&\leq C \|\nabla g  - \nabla \hat{g} \|_{L^p(\Omega)}^{p/2} \left( \| \nabla u \|_{L^p(\Omega)}^{p-1} + \| \nabla v \|_{L^p(\Omega)}^{p-1} \right)^{p/2}
\\
&\cdot \left( \| \nabla u \|_{L^p(\Omega)} + \| \nabla v \|_{L^p(\Omega)} \right)^{p(2-p)/2}
\\
&\leq C \|\nabla g  - \nabla \hat{g} \|_{L^p(\Omega)}^{p/2} \left( \| \nabla u \|_{L^p(\Omega)} + \| \nabla v \|_{L^p(\Omega)} \right)^{p/2}
\\
&\leq C \|\nabla g  - \nabla \hat{g} \|_{L^p(\Omega)}^{p/2} \left( 2 \| \nabla u \|_{L^p(\Omega)} + \| \nabla u - \nabla v \|_{L^p(\Omega)} \right)^{p/2},
\end{split}
\end{equation}
where the constant $C$ depends only on $p$.
If $\| \nabla v - \nabla u \|_{L^p(\Omega)} \leq \| \nabla u \|_{L^p(\Omega)}$, then we obtain by using Lemma \ref{Uniform bound for the p-energy of the solutions of the p-Laplace equation}
\begin{align*}
\left\| \nabla u - \nabla v \right\|_{L^p(\Omega)}^p
&\leq C \left\| \nabla g - \nabla \hat{g} \right\|_{L^p(\Omega)}^{p/2} \left\| \nabla u \right\|_{L^p(\Omega)}^{p/2}
\\
&\leq C \left\| \nabla g - \nabla \hat{g} \right\|_{L^p(\Omega)}^{p/2} K(f,g)^{p/2}
\\
\implies \left\| \nabla u - \nabla v \right\|_{L^p(\Omega)}
&\leq C \left\| \nabla g - \nabla \hat{g} \right\|_{L^p(\Omega)}^{1/2} K(f,g)^{1/2}.
\end{align*}
On the other hand if $\| \nabla v - \nabla u \|_{L^p(\Omega)} > \| \nabla u \|_{L^p(\Omega)}$, we can multiply both sides of \eqref{technicaldetailthree} with $\| \nabla v - \nabla u \|_{L^p(\Omega)}^{-p/2}$ to obtain
\begin{align*}
\left\| \nabla u - \nabla v \right\|_{L^p(\Omega)}^{p/2}
&\leq C \left\| \nabla g - \nabla \hat{g} \right\|_{L^p(\Omega)}^{p/2} \left( \frac{2 \| \nabla u \|_{L^p(\Omega)}}{ \| \nabla v - \nabla u \|_{L^p(\Omega)}} +1 \right)^{p/2}
\\
&\leq C \left\| \nabla g - \nabla \hat{g} \right\|_{L^p(\Omega)}^{p/2}
\\
\implies \| \nabla u - \nabla v \|_{L^p(\Omega)}
&\leq C \left\| \nabla g - \nabla \hat{g} \right\|_{L^p(\Omega)}.
\end{align*}
In conclusion by using Theorem \ref{Differentiable structure on mms throu asym gen linear properties} and \eqref{newtoniannorm2} we get
\begin{align*}
&\|u-v\|_{N^{1,p}(\Omega)}
\leq (c_1+1) \sqrt{N} \| \nabla u - \nabla v \|_{L^p(\Omega)} + (c_1 \sqrt{N} +1) \| g - \hat{g} \|_{N^{1,p}(\Omega)}
\\
&\leq C \max \left\{ \left\| \nabla g - \nabla \hat{g} \right\|_{L^p(\Omega)}^{1/2} K(f,g)^{1/2} , \left\| \nabla g - \nabla \hat{g} \right\|_{L^p(\Omega)} \right\} + C \| g-\hat{g} \|_{N^{1,p}(\Omega)}
\\
&\leq C \max \left\{ \| g - \hat{g} \|_{N^{1,p}(\Omega)} , \left\| g - \hat{g} \right\|_{N^{1,p}(\Omega)}^{1/2} K(f,g)^{1/2} \right\},
\end{align*}
where the constant $C$ depends only on $c_1$, $N$ and $p$.
This proves the result for the case $1 < p < 2$.
\end{proof}

Now we are ready to prove the general stability result.

\begin{theorem}
Let $(f_k)_ {k=1}^{\infty}$ be a sequence of $L^{p'}(\Omega)$ functions that converges to $f \in L^{p'}(\Omega)$ in the $L^{p'}(\Omega)$ norm.
Similarly let $(g_k)_{k=1}^{\infty}$ be a sequence of $N^{1,p}(\Omega)$ functions that converges to $g \in N^{1,p}(\Omega)$ in the $N^{1,p}(\Omega)$ norm.
For every $k \geq 1$ let $u_k$ be a solution to the Dirichlet problem for the $p$-Poisson equation (in the Cheeger's setting) with source term $f_k$ and boundary data~$g_k$, and let $u$ be a solution with source term $f$ and boundary data $g$. Then $(u_k)_{k=1}^{\infty}$ converges to $u$ in the $N^{1,p}(\Omega)$ norm.
\end{theorem}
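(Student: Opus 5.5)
The plan is to split the perturbation into two steps through an intermediate solution, so that each step is covered by one of the two preceding stability theorems. For each $k$, let $w_k$ be a solution to the Dirichlet problem (in the Cheeger's setting) with source term $f$ and boundary data $g_k$. Such a $w_k$ exists: the existence theorem of Section \ref{sect-existence} gives a minimizer of $J$, but we need a minimizer of $J_C$. I would rerun the same direct-method argument with $\int_\Omega |\nabla u|_x^p\,d\mu$ in place of $\int_\Omega g_u^p\,d\mu$, using \eqref{mazyalemmacheeger} for coercivity. For lower semicontinuity I would use that $u\mapsto \|\nabla u\|_{L^p(\Omega)}$ is convex, because the Cheeger gradient is linear and each $|\cdot|_x$ is a norm. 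Theorem \ref{Minimizing property of solutions} then shows that this minimizer satisfies \eqref{Variational formulation of the p-Laplace equation}.

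By the triangle inequality,
\begin{equation*}
\|u_k-u\|_{N^{1,p}(\Omega)} \leq \|u_k-w_k\|_{N^{1,p}(\Omega)} + \|w_k-u\|_{N^{1,p}(\Omega)}.
\end{equation*}

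The first term compares two solutions with the same boundary data $g_k$ and source terms $f_k$ and $f$. The theorem on stability with respect to the source term bounds it by $C\|f_k-f\|_{L^{p'}(\Omega)}^{1/(p-1)}$ when $p\ge 2$. When $1<p<2$ the bound is $C\max\{\|f_k-f\|_{L^{p'}(\Omega)} K(f_k,g_k)^{2-p},\ \|f_k-f\|_{L^{p'}(\Omega)}^{1/(p-1)}\}$. That theorem is stated with $u$ the solution for the first source term, so the symmetric roles must be handled with care. Here $u_k$ is the solution with source $f_k$ and data $g_k$, which gives the factor $K(f_k,g_k)$ (alternatively, swap roles to get $K(f,g_k)$).

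The second term compares solutions with the same source $f$ and boundary data $g$ and $g_k$. Apply the theorem on stability with respect to boundary data with $u$ as the first solution, taking $\hat g=g_k$. This yields
\begin{equation*}
\|w_k-u\|_{N^{1,p}(\Omega)} \leq C\max\left\{\|g-g_k\|_{N^{1,p}(\Omega)},\ \|g-g_k\|_{N^{1,p}(\Omega)}^{1/\max\{p,2\}}K(f,g)^{1-1/\max\{p,2\}}\right\},
\end{equation*}
where $K(f,g)$ is a fixed number, so this tends to zero.

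The main obstacle is to show that the factor $K(f_k,g_k)$ in the first term stays bounded as $k\to\infty$. From its definition in Lemma \ref{Uniform bound for the p-energy of the solutions of the p-Laplace equation}, $K$ is a continuous increasing function of $\|f_k\|_{L^{p'}(\Omega)}$ and $\|\nabla g_k\|_{L^p(\Omega)}$. The first quantity converges to $\|f\|_{L^{p'}(\Omega)}$. For the second, Theorem \ref{Differentiable structure on mms throu asym gen linear properties} gives $\|\nabla g_k\|_{L^p(\Omega)}\le \|g_{g_k}\|_{L^p(\Omega)}\le \|g_k\|_{N^{1,p}(\Omega)}$, which is bounded since $g_k\to g$ in $N^{1,p}(\Omega)$. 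Hence $\sup_k K(f_k,g_k)<\infty$, and the first term tends to zero. All constants $C$ depend only on $c_1$, $N$ and $p$, and $c_1$ depends only on $\Omega$ and the space, so they are uniform in $k$. Combining the two estimates gives $u_k\to u$ in $N^{1,p}(\Omega)$.
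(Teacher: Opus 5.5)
Your proof is correct and is essentially the paper's argument: a triangle inequality through an intermediate solution, followed by the two one-parameter stability theorems. The only difference is that your intermediate $w_k$ (source $f$, data $g_k$) sits at the opposite corner from the paper's $v_k$ (source $f_k$, data $g$). The $k$-dependent factor is therefore $K(f_k,g_k)$ in the source-term estimate rather than $K(f_k,g)$ in the boundary-data estimate, and you bound it correctly via $|\nabla g_k|_x\le g_{g_k}$.

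You are also right that Section~\ref{sect-existence} gives minimizers of $J$ rather than of $J_C$, so the intermediate Cheeger solution needs its own existence argument; the paper uses it without comment. Your direct-method fix works, but convexity of $u\mapsto\|\nabla u\|_{L^p(\Omega)}$ only handles the convex combinations, and you still need a closure step. One option is reflexivity of $N^{1,p}(X)$ together with continuity of $\nabla$ on $N^{1,p}$. Another is Mazur's lemma applied to the pairs $(u_n,\nabla u_n)$, combined with $g_{w}\le\sqrt{N}\,|\nabla w|_x$, which makes the convex combinations Cauchy in $N^{1,p}$.
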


\begin{proof}
For every $k \geq 1$ let $v_k$ be a solution to the Dirichlet problem for the $p$-Poisson equation (in the Cheeger's setting) with source term $f_k$ and boundary data $g$.
Notice that the solution $v_k$ shares the source term with~$u_k$, while $v_k$ shares the boundary data with $u$.
Thus we can use the stability results when one of the parameters is fixed - the source term or the boundary data. 
Then by using the triangle inequality we get
\begin{align*}
\| u-u_k \|_{N^{1,p}(\Omega)}
&\leq \| u-v_k \|_{N^{1,p}(\Omega)} + \| v_k-u_k \|_{N^{1,p}(\Omega)}
\\
&\leq C \|f-f_k\|_{L^{p'}(\Omega)}^{\frac{1}{p-1}}
\\
&+ C \max \left\{ \| g - g_k \|_{N^{1,p}(\Omega)} , \left\| g - g_k \right\|_{N^{1,p}(\Omega)}^{\frac{1}{p}} K(f_k,g)^{1-\frac{1}{p}} \right\},
\end{align*}
if $p \geq 2$.
If $1 < p < 2$, we get
\begin{align*}
\| u-u_k \|_{N^{1,p}(\Omega)}
&\leq \| u-v_k \|_{N^{1,p}(\Omega)} + \| v_k-u_k \|_{N^{1,p}(\Omega)}
\\
&\leq C \max \left\{ \|f-f_k\|_{L^{p'}(\Omega)} K(f,g)^{2-p} , \|f-f_k\|_{L^{p'}(\Omega)}^{\frac{1}{p-1}} \right\}
\\
&+ C \max \left\{ \| g - g_k \|_{N^{1,p}(\Omega)} , \left\| g - g_k \right\|_{N^{1,p}(\Omega)}^{\frac{1}{2}} K(f_k,g)^{\frac{1}{2}} \right\}.
\end{align*}
In both cases we get the convergence, because $(K(f_k,g))_{k=1}^{\infty}$ also converges to $K(f,g)$, when $f_k \to f$ in the $L^{p'}$ norm. This completes the proof.
\end{proof}

An immediate corollary of the stability results is the following.

\begin{corollary}
The solution to the Dirichlet problem for the $p$-Poisson equation (in the Cheeger's setting) is unique.
\end{corollary}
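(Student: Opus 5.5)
Uniqueness should follow directly from the stability theorem for perturbations of the source term. Suppose $u$ and $v$ are both solutions, in the sense of Definition \ref{solutiondefinitioncheeger}, to the Dirichlet problem with the same source term $f \in L^{p'}(\Omega)$ and the same boundary data $g \in N^{1,p}(\Omega)$. I would apply that theorem with $h = f$. This is legitimate because its hypotheses only ask that $u$ solve the problem with data $(f,g)$ and $v$ solve it with data $(h,g)$; nothing requires the two source terms to differ.

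For $p \geq 2$ the theorem then gives
\begin{equation*}
\|u-v\|_{N^{1,p}(\Omega)} \leq C \|f-f\|_{L^{p'}(\Omega)}^{\frac{1}{p-1}} = 0.
\end{equation*}
For $1<p<2$ it gives the bound
\begin{equation*}
C\max\{0\cdot K(f,g)^{2-p},0\}=0.
\end{equation*}
Here $K(f,g)$ is finite by Lemma \ref{Uniform bound for the p-energy of the solutions of the p-Laplace equation}, so the product $0\cdot K(f,g)^{2-p}$ really is $0$. In both cases $\|u-v\|_{N^{1,p}(\Omega)}=0$.

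Alternatively, one can skip the theorem and use \eqref{differencewithuandv} directly. With $h=f$ its right-hand side vanishes, and Lemma \ref{notclarkson} then forces $\nabla u=\nabla v$ almost everywhere. (For $p<2$ this uses that the integrand is nonnegative and vanishes only where $\nabla u=\nabla v$.) Then \eqref{newtoniannorm} gives $\|u-v\|_{N^{1,p}(\Omega)}=0$.

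The only subtle point, which I expect to be the main obstacle, is what ``unique'' means. A vanishing $N^{1,p}$ norm identifies functions only as elements of the Newtonian space, i.e.\ up to the usual equivalence. One should note that, by the standard theory (\cite[Proposition 1.61]{bjornbjorn}), $u-v$ having zero norm means $u=v$ quasieverywhere, in particular $\mu$-almost everywhere. So the solution is unique as an element of $N^{1,p}(\Omega)$, which is the natural sense of the statement.
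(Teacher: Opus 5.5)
Your proposal is correct and follows the paper's own argument. The paper states this corollary as an immediate consequence of the stability results: with identical data ($h=f$, or equivalently $\hat g = g$), the stability bound forces $\|u-v\|_{N^{1,p}(\Omega)}=0$. Your direct route via \eqref{differencewithuandv} and Lemma \ref{notclarkson}, and your remark that uniqueness holds up to the Newtonian equivalence (equality quasieverywhere), are sound details that the paper leaves implicit.
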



\begin{thebibliography}{100}


\bibitem{Benatti2025}
L.\ Benatti and I.\ Y.\ Violo,
\emph{Second-order estimates for the $p$-Laplacian in RCD spaces},
J. Differential Equations \textbf{439} (2025).


\bibitem{bjornbjorn}
A. Björn and J. Björn,
\emph{Nonlinear potential theory on metric spaces},
EMS Tracts Math., volume 17, European Mathematical Society (EMS), Zürich, 2011.


\bibitem{Bungert2023}
L. Bungert,
\emph{The inhomogeneous $p$-Laplacian equation with Neumann boundary conditions in the limit $p\to \infty$},
Adv. Contin. Discrete Models (2023), Paper No. 8.


\bibitem{Capogna2026}
L. Capogna, R. Gibara, R. Korte and N. Shanmugalingam,
\emph{Fractional $p$-Laplacians via Neumann problems in unbounded metric measure spaces},
Preprint, arXiv:2410.18883, 2024.


\bibitem{Capogna2022}
L. Capogna, J. Kline, R. Korte, N. Shanmugalingam and M. Snipes,
\emph{Neumann problems for $p$-harmonic functions, and induced nonlocal operators in metric measure spaces},
Amer. J. Math. \textbf{147} (2025), no. 6, 1653–1711.


\bibitem{Cheeger1999}
J.\ Cheeger,
\emph{Differentiability of Lipschitz functions on metric measure spaces},
Geom. Funct. Anal. \textbf{9} (1999), no. 3, 428--517.


\bibitem{Cianchi2018}
A.\ Cianchi and V.\ G.\ Maz'ya,
\emph{Second-order two-sided estimates in nonlinear elliptic problems},
Arch. Ration. Mech. Anal. \textbf{229} (2018), no. 2, 569--599.




\bibitem{Folland1999}
G.\ B.\ Folland,
\emph{Real analysis: Modern techniques and their applications},
Pure Appl. Math.
Wiley–Intersci. Publ. John Wiley \& Sons, Inc., New York, 1999, 2nd ed.


\bibitem{Garain2020}
P.\ Garain,
\emph{Properties of solutions to some weighted $p$-Laplacian equation},
Opuscula Math. \textbf{40} (2020), no. 4, 483--494.


\bibitem{Gong2012}
J.\ Gong, J.\ J.\ Manfredi and M.\ Parviainen,
\emph{Nonhomogeneous variational problems and quasi-minimizers on metric spaces},
Manuscripta Math. \textbf{137} (2012), no. 1--2, 247--271.


\bibitem{Heinonen1993}
J. Heinonen, T. Kilpeläinen and O. Martio,
\emph{Nonlinear potential theory of degenerate elliptic equations},
Oxford Math. Monogr.
Oxford Sci. Publ.
The Clarendon Press, Oxford University Press, New York, 1993.


\bibitem{Heinonen2015}
J. Heinonen, P. Koskela, N. Shanmugalingam and J. T. Tyson,
\emph{Sobolev spaces on metric measure spaces},
Cambridge University Press, Cambridge, 2015.






\bibitem{Kolokoltsov2019}
V. Kolokoltsov,
\emph{Differential equations on measures and functional spaces},
Birkhäuser Adv. Texts Basler Lehrbücher, Birkhäuser/Springer, Cham, 2019.


\bibitem{Lindgren2015}
E.\ Lindgren and P.\ Lindqvist,
\emph{Regularity of the $p$-Poisson equation in the plane},
J. Anal. Math. \textbf{132} (2017), 217–228.


\bibitem{Lindqvist2017}
P. Lindqvist,
\emph{Notes on the $p$-Laplace equation (second edition)},
Rep. Univ. Jyväskylä Dep. Math. Stat., 161, University of Jyväskylä, Jyväskylä, 2017.




\bibitem{Mikkonen1996}
P. Mikkonen,
\emph{On the Wolff potential and quasilinear elliptic equations involving measures},
Ann. Acad. Sci. Fenn. Math. Diss. No. 104 (1996).


\bibitem{nages}
N. Shanmugalingam,
\emph{Harmonic functions on metric spaces},
Illinois J. Math. {\bf 45} (2001), no. 3, 1021-1050.
\end{thebibliography}
\end{document}